\newtheorem{theorem}{Theorem}
\newtheorem{proposition}[theorem]{Proposition}
\newtheorem{definition}[theorem]{Definition}
\newtheorem{lemma}[theorem]{Lemma}
\newtheorem{remark}[theorem]{Remark}
\newcommand{\R}{\mathbb R}
\numberwithin{theorem}{section}
\numberwithin{equation}{section}
\begin{document} 

\title[Angular momentum and center of mass]{Conserved Quantities of harmonic asymptotic
 \\ 
initial data sets }
\author{Po-Ning Chen and Mu-Tao Wang}
\begin{abstract} 
In the first half of  this article, we survey the new quasi-local and total angular momentum and center of mass defined in \cite{Chen-Wang-Yau3} and summarize the important properties of these definitions. To compute these conserved quantities involves solving a nonlinear PDE system (the optimal isometric embedding equation), which is rather difficult in general. We found a large family of initial data sets on which such a calculation can be carried out effectively. These are initial data sets of {\it harmonic asymptotics}, first proposed by  Corvino and Schoen to solve the full vacuum constraint equation. In the second half of this article, the new total angular momentum and center of mass for these initial data sets are computed explicitly. 
\end{abstract}
\address{Department of Mathematics \\ Columbia University \\ 2990 Broadway \\ New York, NY 10027}
\thanks{ P.-N. Chen is supported by NSF grant DMS-1308164 and M.-T. Wang was supported by NSF grants DMS-1105483 and DMS-1405152. Both authors
would like to thank Justin Corvino, Lan-Hsuang Huang, Richard Schoen, and Shing-Tung Yau. In particular, they learned the values of the ADM angular momentum
and BORT center of mass for harmonic asymptotic initial data sets from Huang. }
\date{\today}
\maketitle 
\section{Introduction}
By Noether's principle, any continuous symmetry of the action of a physical system corresponds to a conserved quantity. In special relativity, integrating the energy-momentum tensor of matter density against Killing fields in $\mathbb{R}^{3,1}$ gives the well-defined notion of energy momentum, angular momentum, and center of mass. In attempt to generalize these concepts to general relativity, one encounters two major difficulties. Firstly,  gravitation does not have mass density. Secondly, there is no symmetry in a general spacetime. As such, most studies of conserved quantities are restricted to isolated systems on which asymptotically flat coordinates exist at infinity. However, it has been conjectured \cite{Penrose} for decades that a quasi-local description of conserved quantities should exist, at least for energy-momentum and angular momentum. These are notions attached to a spacelike 2-surface $\Sigma$ in spacetime. In \cite{Wang-Yau1}, a new definition of quasi-local energy-momentum and quasi-local mass was proposed using isometric embeddings of the 2-surface into $\mathbb{R}^{3,1}$. The expressions originated from the boundary term in the Hamilton--Jacobi analysis of gravitation action \cite{Brown-York2, hh}. To each pair of $(X, T_0)$ where $X:\Sigma\hookrightarrow \mathbb{R}^{3,1}$ is an isometric embedding and $T_0$ is a future time-like unit Killing field in $\mathbb{R}^{3,1}$, a canonical gauge is chosen and a  quasi-local energy is assigned. The quasi-local mass is obtained by minimizing the quasi-local energy seen among admissible $(X, T_0)$. For each critical point of the quasi-local energy, the isometric embedding and the canonical gauge were used to transplant Killing fields in $\mathbb{R}^{3,1}$ back to the surface of interest in the physical spacetime. In particular, this defines quasi-local angular momentum and quasi-local center of mass with respect to rotation Killing fields and boost Killing fields. We refer to \cite{Szabados} for earlier work on the definition of quasi-local angular momentum, notably \cite{Penrose2}. This new proposal is further applied to study asymptotically flat spacetime and to define new total conserved quantities on asymptotically flat \cite{Chen-Wang-Yau3} and asymptotically hyperbolic \cite{Chen-Wang-Yau4} initial data sets.  The new definition satisfies highly desirable properties, and captures the dynamics of the Einstein equation. 

In studying the dynamics of the Einstein equation, an important problem is the construction of initial data sets. The initial data consists of $(M, g, k)$ where $g$ is the induced metric and $k$ is the second fundamental form of the manifold $M$ in a given spacetime. For vacuum spacetimes, the initial data sets satisfy the vacuum constraint equation. 
\begin{definition}
We said that $(M,g,k)$ satisfies the vacuum constraint equation if 
\[  
\begin{split}
R(g)+ (tr_g k)^2- |k|_g^2 = & 0 \\
div_g(k - (tr_g k )g) = & 0.
\end{split}
\] 
Here $R(g)$ is the scalar curvature of the metric $g$.
\end{definition}
For time-symmetric initial data with $k=0$, the vacuum constraint equation reduces to the scalar curvature equation $R(g)=0$, a semi-linear elliptic equation for the conformal factor of a conformal metric $g$. Such initial data sets can be constructed effectively using the quasi-spherical gauge method introduced by Bartnik \cite{Bartnik}. This construction plays an important role in the proof of the positivity of quasi-local mass  \cite{Liu-Yau,Shi-Tam}.

For initial data sets with non-trivial $k$, it is much harder to reduce the under-determined vacuum constraint equation to a well-posed equation. One of the methods to construct such initial data sets is the  conformal method by York \cite{York}. The conformal method is particularly useful for numerical general relativity, for example, to construct initial data sets representing binary black holes. A more recent method is a gluing construction based on the harmonic asymptotic ansatz introduced in \cite{Corvino-Schoen} by Corvino and Schoen. The ansatz provides a way to reduce the  vacuum constraint equation to an elliptic systems with four unknowns and four equations, see \eqref{vacuum_constraint}. More importantly, the  Arnowitt--Deser--Misner (ADM) energy, linear momentum, angular momentum and center of mass can be read off from the asymptotics. In \cite{Huang-Wang-Schoen}, the harmonic asymptotic initial data sets are used  to construct counterexamples to the conjectured angular momentum-mass inequality. In this article, we compute the new (CWY) total angular momentum and center of mass defined in \cite{Chen-Wang-Yau3} for harmonic asymptotic initial data sets.

In the first half of the article, we review the definition of the quasi-local energy-momentum in \cite{Wang-Yau2} and the quasi-local angular momentum and center of mass in \cite{Chen-Wang-Yau3} and describe how to define the total conserved quantities for an asymptotically flat initial data using the newly defined quasi-local conserved quantities. The important properties of the new quasi-local and total conserved quantities are summarized in Section \ref{sec_list_properties}. In the second half of the article, we compute the CWY total conserved quantities for harmonic asymptotic initial data. The computation provides a large family of examples of non-vanishing linear momentum on which the procedure outlined in Section \ref{sec_outline_total} is carried out.

\section{ADM energy-momentum, angular momentum and center of mass} \label{sec_ADM}
In this section, we review the ADM energy-momentum, ADM angular momentum \cite{Arnowitt-Deser-Misner} and the Beig--\'O Murchadha--Regge--Teitelboim (BORT) center of mass \cite{Beig-Omurchadha,Regge-Teitelboim} for asymptotically flat initial data sets.  

\begin{definition} $(M, g, k)$ is said to be asymptotically flat if there exists a compact subset $K\subset M$ such that $M\backslash K$ is diffeomorphic to a union of complements of balls in $\R^3$ (called ends). The diffeomorphism provides an asymptotically flat coordinate 
system on each end such that

\[ g_{ij}=\delta_{ij} +a_{ij} \text{ with }
a_{ij}=O({r}^{\alpha}), \partial_k(a_{ij})=O(r^{\alpha-1}), \partial_l\partial_k(a_{ij})=O(r^{\alpha-2}),\] and  \[ k_{ij}=O(r^{\beta}),
\partial_k(k_{ij})=O(r^{\beta-1})\] on each end of $M\backslash K$.  
\end{definition}

The ADM energy-momentum of asymptotically flat initial data sets is defined as follows.

\begin{definition} 
Let $(M, g_{ij}, k_{ij})$ be an asymptotically flat initial data. The ADM  energy momentum vector $(E,P_k)$ of an end of $M$  is

 \[
\begin{split}
E=&\frac{1}{16\pi } \lim_{r\rightarrow \infty} \int_{\Sigma_r}\sum_{i,j}(\partial_j g_{ij}-\partial_i g_{jj})\nu^i d\Sigma_r \\
P_k=& \frac{1}{8\pi }\lim_{r\rightarrow \infty} \int_{\Sigma_r}
\sum_{i}\pi_{ik}\nu^i d\Sigma_r, 
\end{split}
\] where  $\pi_{ik}= k_{ik}-g_{ik} k_j^{j}$. Here $\Sigma_r$ is a coordinate sphere of radius $r$ with respect to the asymptotically flat coordinate system on this end and $\nu$ is the unit normal of $\Sigma_r$ in $M$.  
\end{definition}
The ADM angular momentum and BORT center of mass are defined as follows. 
\begin{definition} 
Let $(M, g, k)$ be an asymptotically flat initial data set. The center of mass $C^i$ and the angular momentum $J_i $ with respect to $Y_{(i)}=\frac{\partial}{\partial x^i} \times \vec{x}$, for $i=1,2,3$, are defined by
 \[
\begin{split}
	C^{i}
	= & \frac{1}{ 16 \pi } \lim_{r\rightarrow \infty} \int_{\Sigma_r} \Big  [ x^i\sum_{k,j}\left(\partial_k g_{kj}-\partial_j g_{kk} \right)\nu^j -
	        \sum_k ( g_{ik}\nu^k -g_{kk} \nu^{i}) \Big ] \, d\Sigma_r \\
	J_i=& \frac{1}{ 8 \pi } \lim_{r\rightarrow \infty} \int_{\Sigma_r} \sum_{j,k} \pi_{jk} Y^j_{(i)} \nu^k \, d\Sigma_r.
\end{split}
\]
\end{definition}

In the definition of the angular momentum, $Y_{(i)}$ plays the role of an asymptotically Killing field, which ultimately depends on the asymptotically flat coordinate system.

The ADM angular momentum and BORT center of mass are more complicated than the ADM energy-momentum since the corresponding Killing fields are of higher order near spatial infinity. In particular, there exist hypersurfaces in the Minkowski spacetime with non-trivial total angular momentum and center of mass.  Explicit examples of such hypersurfaces can be found in \cite[Theorem 4.1 and 4.7]{Chen-Huang-Wang-Yau}. We describe the example for ADM angular momentum here.

Let $\tilde X^i$, $i=1,2,3$, be the three standard $-2$ eigenfunctions of the Laplacian (or coordinate functions of the standard embedding 
into $\R^{3}$) of the round metric on $S^2$. Consider the the hypersurface defined by $t=r^{\frac{1}{3}}\tilde X^1 (\tilde X^2)^3$ in the Minkowski spacetime. While the hypersurface has vanishing ADM energy-momentum and vanishing BORT center of mass,   
its ADM angular momentum is finite and non-zero. 

\section{Quasi-local energy-momentum, center of mass and angular momentum}
We recall the definition of the quasi-local energy-momentum defined in  \cite{Wang-Yau2} and the quasi-local angular momentum and center of mass defined in \cite{Chen-Wang-Yau3}.

Let $\Sigma$ be a closed embedded spacelike 2-surface in a spacetime with spacelike mean curvature vector $H$. The data used in the definition of quasi-local mass is the triple $(\sigma,|H|,\alpha_H)$ where $\sigma$ is the induced metric on $\Sigma$, $|H|$ is the norm of the mean curvature vector and $\alpha_H$ is the connection one form of the normal bundle with respect to the mean curvature vector
\[ \alpha_H(\cdot )=\langle \nabla^N_{(\cdot)}   \frac{J}{|H|}, \frac{H}{|H|}  \rangle  \]
where $J$ is the reflection of $H$ through the incoming  light cone in the normal bundle.

Given an isometric embedding $X:\Sigma\rightarrow \R^{3,1}$ and a constant future timelike unit vector $T_0\in \R^{3,1}$, we consider the projected embedding $\widehat{X}$ into the orthogonal complement of $T_0$, we denote the induced metric, the second fundamental form, and the mean curvature of the image by $\hat{\sigma}_{ab}$, $\hat{h}_{ab}$, and $\widehat{H}$, respectively.  

Let $\tau= -X \cdot T_0$. The quasi-local energy with respect to $(X, T_0)$ is (see \S 6.2 of \cite{Wang-Yau2})
\[\begin{split}&E(\Sigma, X, T_0)=\int_{\widehat{\Sigma}}\widehat{H}d{\widehat{\Sigma}}-\int_\Sigma \left[\sqrt{1+|\nabla\tau|^2}\cosh\theta|{H}|-\nabla\tau\cdot \nabla \theta -\alpha_H ( \nabla \tau) \right]d\Sigma,\end{split}\] where $\theta=\sinh^{-1}(\frac{-\Delta\tau}{|H|\sqrt{1+|\nabla\tau|^2}})$ and $\nabla$ and $\Delta$ are the gradient and Laplacian, respectively, with respect to $\sigma$.

In relativity, the energy of a moving particle depends on the observer, and the rest mass is the minimal energy seen among all observers. In order to define quasi-local mass, we minimize quasi-local energy $E(\Sigma, X, T_0)$ among all admissible pairs $(X, T_0)$ which are considered to be quasi-local observers. A critical point corresponds to an optimal isometric embedding which is defined as follows \cite{Wang-Yau2}:

\begin{definition}
Given a closed embedded spacelike 2-surface $\Sigma$ in $N$ with $(\sigma,|H|,\alpha_H)$, an optimal isometric embedding is an embedding $X:\Sigma\rightarrow \R^{3,1}$ such that the induced metric of the image surface in $\R^{3,1}$ is $\sigma$, and there exists a $T_0$ such that
$\tau=-\langle X, T_0\rangle$ satisfies
\begin{equation} \label{optimal}
 -(\widehat{H}\hat{\sigma}^{ab} -\hat{\sigma}^{ac} \hat{\sigma}^{bd} \hat{h}_{cd})\frac{\nabla_b\nabla_a \tau}{\sqrt{1+|\nabla\tau|^2}}+ div_\sigma (\frac{\nabla\tau}{\sqrt{1+|\nabla\tau|^2}} \cosh\theta|{H}|-\nabla\theta-\alpha_{H})=0.
\end{equation}
\end{definition}
Denote the norm of the mean curvature vector and the connection one-form in mean curvature gauge of the image of $X$ by $|H_0|$ and $\alpha_{H_0}$, respectively. The quasi-local energy is best described by $\rho $ and $j_a$ defined as follows: 
\[
\begin{split}
 \label{rho}\rho &= \frac{\sqrt{|H_0|^2 +\frac{(\Delta \tau)^2}{1+ |\nabla \tau|^2}} - \sqrt{|H|^2 +\frac{(\Delta \tau)^2}{1+ |\nabla \tau|^2}} }{ \sqrt{1+ |\nabla \tau|^2}}
\\ 
j_a & =\rho {\nabla_a \tau }- \nabla_a [ \sinh^{-1} (\frac{\rho\Delta \tau }{|H_0||H|})]-(\alpha_{H_0})_a + (\alpha_{H})_a.\end{split}
\]
In terms of these, the quasi-local energy is $\frac{1}{8\pi}\int_\Sigma (\rho+j_a\nabla^a\tau)$ and a pair of an embedding $X:\Sigma\hookrightarrow \mathbb{R}^{3,1}$ and an observer $T_0$ satisfies the optimal isometric embedding equation if $X$ is an isometric embedding and 
\[
div_{\sigma} j=0.
\]

We define the quasi-local angular momentum and center of mass for each optimal isometric embedding. Let $(x^0,x^1,x^2,x^3)$ be the standard coordinate of $\R^{3,1}$. We recall that $K$ is a rotational Killing field if $K$ is the image of $x^i \frac{\partial}{\partial x^j}-x^j \frac{\partial}{\partial x^i} $ under a Lorentz transformation. Similarly, $K$ is a boost Killing field if $K$ is the image of $x^0 \frac{\partial}{\partial x^i}+x^i \frac{\partial}{\partial x^0} $ under a Lorentz transformation. 

\begin{definition} 
The quasi-local conserved quantity of $\Sigma$ with respect to an optimal isometric embedding $(X, T_0)$ and a Killing field $K$ is 
\[
E(\Sigma, X, T_0, K)=\frac{(-1)}{8\pi} \int_\Sigma \left[ \langle K, T_0\rangle \rho+j(K^\top) \right]d\Sigma,
\]
where $K^\top$ is the tangential part of $K$ to $X(\Sigma)$. 

Suppose $T_0=A(\frac{\partial}{\partial x^0})$ for a Lorentz transformation $A$, then the quasi-local conserved quantities corresponding to $A(x^i\frac{\partial}{\partial x^j}-x^j\frac{\partial}{\partial x^i}), i<j$ are called the quasi-local angular momentum integral with respect
to $T_0$ and the quasi-local conserved quantities corresponding to $A(x^i\frac{\partial}{\partial x^0}+x^0 \frac{\partial}{\partial x^i}), i=1, 2, 3$ are called the quasi-local center of mass integral with respect to $T_0$. 
\end{definition}

\section{Total conserved quantities  for order $1$ data} \label{sec_outline_total}
In this section, we review the definition of total conserved quantities using the limits of quasi-local conserved quantities (see Section 6 of \cite{Chen-Wang-Yau3}).

First we recall the definition of asymptotically flat initial data set of order $1$.
\begin{definition}\label{order_one}
 $(M, g, k)$ is said to be \textit{asymptotically flat of order 1} if there is a compact subset $K$ of $M$ such that 
$ M \backslash K $ is diffeomorphic a union of ends. In the asymptotically flat coordinate system on each end, we have the following decay condition for $g$ and $k$.
\begin{equation*} 
\begin{split}
g_{ij} & = \delta_{ij}+ \frac{g_{ij}^{(-1)}}{r}+  \frac{g_{ij}^{(-2)}}{r^2}+ o(r^{-2})\\
k_{ij} & =   \frac{k_{ij}^{(-2)}}{r^2}   +  \frac{k_{ij}^{(-3)}}{r^3} +o(r^{-3}).
\end{split}
\end{equation*}
\end{definition}

 On each level set of $r$,  $\Sigma_r$, we can use $\{u^a\}_{a=1, 2}$ as coordinate system to express the geometric data we need in order to define the quasi-local conserved quantities: 
\[  
\begin{split}
\sigma_{ab} & = r^2 \tilde \sigma_{ab}+ r \sigma_{ab}^{(1)} + \sigma_{ab}^{(0)}+ o(1) \\
|H| & = \frac{2}{r}+ \frac{h^{(-2)}}{r^2}+\frac{h^{(-3)}}{r^3} + o(r^{-3}) \\
\alpha_H & = \frac{\alpha_H^{(-1)} }{r}+ \frac{\alpha_H^{(-2)} }{r^2} + o(r^{-2})
\end{split}
\] 
where $\tilde{\sigma}_{ab}$ is the standard metric on a unit round sphere $S^2$. $\sigma_{ab}^{(1)}$ and $ \sigma_{ab}^{(0)}$ are considered as symmetric 2-tensors on $S^2$, $h^{(-2)}$ and $h^{(-3)}$ are functions on $S^2$ and $\alpha_H^{(-1) }$ and  $\alpha_H^{(-2)}  $ are 1-forms on $S^2$. We shall see that all total conserved quantities on $(M, g, k)$ can be determined by these data.

Using the result of \cite{Chen-Wang-Yau1}, there is a unique family of isometric embeddings $X(r)$ and observers $T_0(r)$ which minimizes the quasi-local energy locally. 
The embedding $ X({r})= (X^0({r}), X^i({r})) $ has the following expansion
\begin{equation}
\begin{split}\label{optimal_order} 
X^0({r}) & = (X^0)^{(0)} + \frac{(X^0)^{(-1)}}{r} + o(r^{-1}) \\
X^i ({r})& = r \tilde X^i + (X^i)^{(0)} + \frac{(X^i)^{(-1)}}{r} + o(r^{-1}) \\
T_0 ({r})& = (a^0,a^i) + \frac{1}{r}(0, (a_i)^{(-1)}) + o(r^{-1}).
\end{split}
\end{equation}

Using these, we compute 
\[  
\begin{split}
|H_0| & = \frac{2}{r}+ \frac{h_0^{(-2)}}{r^2}+\frac{h_0^{(-3)}}{r^3} + o(r^{-3}) \\
\alpha_{H_0} & = \frac{\alpha_{H_0}^{(-1)} }{r}+ \frac{\alpha_{H_0}^{(-2)} }{r^2} + o(r^{-2})
\end{split}
\] 
and the expansion for $\rho$ and $j_a$ 
\[  
\begin{split}
\rho & =  \frac{\rho^{(-2)}}{r^2}+\frac{\rho^{(-3)}}{r^3} + o(r^{-3}) \\
j_a& = \frac{j_a^{(-1)} }{r}+ \frac{j_a^{(-2)} }{r^2} + o(r^{-2}).
\end{split}
\] 

The isometric embedding equation gives 

\[-\partial_a X^0\partial_b X^0+\sum_i \partial_a X^i\partial_b X^i=\sigma_{ab}.\]

By the expansions of $X^0$ and $X^i$ in equation \eqref{optimal_order} , we obtain the following two equations:

\begin{align}\label{isometric1}\partial_a \tilde{X}^i\partial_b (X^i)^{(0)}+\partial_a (X^i)^{(0)}\partial_b \tilde{X}^i= &(\sigma_{ab})^{(1)}\\
\label{isometric2}\partial_a \tilde{X}^i\partial_b (X^i)^{(-1)}+\partial_a (X^i)^{(-1)}\partial_b \tilde{X}^i+\sum_i \partial_a (X^i)^{(0)}\partial_b (X^i)^{(0)}=& (\sigma_{ab})^{(0)} +  \partial_a (X^0)^{(0)}\partial_b (X^0)^{(0)}.\end{align}
On the other hand, the optimal embedding equation reads $div_{\sigma} j =0$. From the expansion of metric and $j_a$, we have
${div_{\tilde \sigma}} j^{(-1)}=0$ which determines $(X^0)^{(0)}$ and $(a^0,a^i)$. The equation reads
 \begin{equation}\label{linearized_optimal}{div_{\tilde \sigma}}(\rho^{(-2)}\widetilde{\nabla}\tau^{(1)})-\frac{1}{4} \widetilde{\Delta} (\rho^{(-2)}\widetilde{\Delta} \tau^{(1)})-\frac{1}{2}\widetilde{\Delta}(\widetilde{\Delta}+2)(X^0)^{(0)}+{div_{\tilde \sigma}}( \alpha_H^{(-1)})=0\end{equation} 
where $ \tau^{(1)} = -\sum_i a^i \tilde X^i$. 

We recall the definitions in \cite{Chen-Wang-Yau3}.
\begin{definition}\label{total_conserved}
Suppose $ T_0({r})=A(r)(\frac{\partial}{\partial x^0})$ for a family of Lorentz transformation $A(r)$.  The CWY total center of mass integral is 
defined to be
\[
C^i  =  \lim_{r \to \infty} E(\Sigma_r, X(r), T_0(r), A({r})(x^i\frac{\partial}{\partial x^0}+x^0\frac{\partial}{\partial x^i})),\\
\] and the CWY total angular momentum integral is defined to be
\[
J_{i} =\lim_{r \to \infty} \epsilon_{ijk}E(\Sigma_r , X(r), T_0(r), A({r})(x^j\frac{\partial}{\partial x^k}-x^k\frac{\partial}{\partial x^j})) \]
where  $\Sigma_r$ are the coordinate spheres and $(X(r), T_0({r}))$ is the unique family of optimal isometric embeddings of $\Sigma_r$ such that $X(r)$ converges to a round sphere of radius $r$ in $\R^3$ when $r\rightarrow \infty$.
\end{definition}

\section{Properties of the CWY total conserved quantities} \label{sec_list_properties}
In this section, we summarize the properties of the CWY total conserved quantities.  
\begin{enumerate}
\item The definition depends only on the geometric data $(g, k)$ and the foliation of surfaces at infinity, and in particular does not depend on an asymptotically flat coordinate system or the existence of an asymptotically Killing field. 
\item All CWY total conserved quantities vanish on any spacelike hypersurface in the Minkowski spacetime, regardless of the asymptotic behavior. In fact, all the quasi-local conserved quantities vanishes for surfaces in the Minkowski spacetime since the definition uses comparison of physical Hamiltonian with reference Hamiltonian. As pointed out in Section \ref{sec_ADM}, this is not true for the ADM angular momentum and BORT center of mass. 

\item 
For the  ADM angular momentum and BORT center of mass, there are issues of finiteness since the rotational and boost Killing fields are of higher order.
On the contrary, the CWY total angular momentum and total center of mass are always finite on any vacuum asymptotically flat initial data set of order $1$.  See \cite[Theorem 7.4]{Chen-Wang-Yau3}.

\item The CWY  total angular momentum satisfies the conservation law. In particular, the CWY total angular momentum on any strictly spacelike hypersurface of the Kerr spacetime is the same. Namely, for any spacelike hypersurface in the Kerr spacetime defined by $t= f(r,u^a)$ for $r>>1$ where $f=o(r)$, its CWY total angular momentum is the same as the stationary hypersurface $t=0$. See \cite[Theorem 8.4]{Chen-Wang-Yau3}. 

\item Under the vacuum Einstein evolution of initial data sets, the CWY total angular momentum is conserved, $\partial_t J_i(t)=0$, and the CWY  total center of mass obeys the dynamical formula $\partial_t C^i(t)=\frac{P^i}{E}$ where $(E,P^i)$ is the ADM energy-momentum four vector. 

In \cite{Chen-Wang-Yau3}, the following theorem was proved:
\begin{theorem}  \label{cor_variation_center}
Let $(M, g, k)$ be a vacuum asymptotically flat initial data set of order $1$. Let $(M, g(t), k(t) )$ be the solution to the initial value problem $g(0)=g$ and $k(0)=k$ for the vacuum Einstein equation with lapse function $N=1+O(r^{-1})$ and shift vector $\gamma=\gamma^{(-1)} r^{-1}+O(r^{-2})$. Let $C^i(t)$ and $J_i(t)$ be the the total center of mass  and total angular momentum  of $(M, g(t), k(t))$ respectively. We have
\[
\begin{split}
\partial_t C^i (t)= &  \frac{P^i}{E},\\
\partial_t J_{i} (t) = &  0
\end{split} 
\]
for $i=1, 2, 3$ where $(E,P^i)$ is the ADM energy momentum of $(M, g, k)$.
\end{theorem}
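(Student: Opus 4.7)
The plan is to differentiate $E(\Sigma_r, X(r,t), T_0(r,t), K(r,t))$ in $t$ at fixed $r$ and then send $r \to \infty$. The three ingredients are the vacuum Einstein evolution $\partial_t g_{ij} = -2N k_{ij} + \mathcal{L}_\gamma g_{ij}$ (together with the companion equation for $\partial_t k_{ij}$), the decay hypotheses $N = 1 + O(r^{-1})$ and $\gamma = \gamma^{(-1)} r^{-1} + O(r^{-2})$, and the fact that $(X(r,t), T_0(r,t))$ is a critical point of the quasi-local energy for every $t$.

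The first step is to convert the bulk evolution into evolution equations for the asymptotic coefficients of Section \ref{sec_outline_total}. Substituting the expansions of $g$, $k$, $N$, $\gamma$ into the evolution yields $\partial_t g_{ij}^{(-1)}$, $\partial_t g_{ij}^{(-2)}$, $\partial_t k_{ij}^{(-2)}$, and so on, from which the $t$-derivatives of $\sigma_{ab}^{(1)}, \sigma_{ab}^{(0)}, h^{(-2)}, h^{(-3)}, \alpha_H^{(-1)}, \alpha_H^{(-2)}$ on $\Sigma_r$ can be read off. As a by-product one recovers $\partial_t E = 0$ and $\partial_t P^i = 0$, which is used repeatedly. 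Next I would differentiate the expansions of $\rho$, $j_a$, and the optimal pair $(X, T_0)$ in $t$. Because $(X(r,t), T_0(r,t))$ solves the optimal isometric embedding equation at each $t$, the first-variation identity for the quasi-local energy shows that variations in $(X, T_0)$ contribute either through divergences on $S^2$ or at strictly subleading order in $r^{-1}$, so $\partial_t X$ and $\partial_t T_0$ drop out of the $r \to \infty$ limit. Thus the limit is determined by the explicit time-derivative of the geometric data $(\sigma, |H|, \alpha_H)$ together with the explicit $t$-dependence of $K$ through the observer $T_0$, mediated by the equations \eqref{isometric1}, \eqref{isometric2}, and \eqref{linearized_optimal}.

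The main obstacle is the angular-momentum case. For a rotation Killing field $K = x^j \frac{\partial}{\partial x^k} - x^k \frac{\partial}{\partial x^j}$ the integrand $\langle K, T_0 \rangle \rho + j(K^\top)$ carries an extra factor of $r$, so the relevant contribution is $O(r^{-1})$ and involves the subleading coefficients $j_a^{(-2)}$, $\rho^{(-3)}$ and $(X^i)^{(-1)}$. The plan is to integrate by parts on $S^2$ and then use $div_{\tilde\sigma}\, j^{(-1)} = 0$, the equations \eqref{isometric1}, \eqref{isometric2}, \eqref{linearized_optimal}, and the asymptotic form of the vacuum constraints to reduce the surviving integrand on $S^2$ to a total divergence, giving $\partial_t J_i = 0$. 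For a boost Killing field the same procedure leaves a non-zero $\ell = 1$ spherical-harmonic residue which, after matching coefficients with the ADM flux integrands defining $E$ and $P^i$, equals exactly $P^i / E$. Property (4) of Section \ref{sec_list_properties} (the Kerr case) is the stationary analogue of the same computation and provides a consistency check; the technical heart of the proof is verifying that all the off-diagonal terms in $\partial_t E(\Sigma_r, X(r), T_0(r), K)$ either cancel or form total divergences on $S^2$ in the $r \to \infty$ limit.
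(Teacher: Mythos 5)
First, a point of reference: this survey does not actually prove Theorem \ref{cor_variation_center}; it quotes it from \cite{Chen-Wang-Yau3} (see Theorem 10.2 there), so there is no in-paper proof to compare against line by line. Your overall strategy --- differentiate $E(\Sigma_r, X(r,t), T_0(r,t), K(r,t))$ in $t$ at fixed $r$, feed in the vacuum evolution equations with $N=1+O(r^{-1})$, $\gamma=\gamma^{(-1)}r^{-1}+O(r^{-2})$, and then take $r\to\infty$ --- is indeed the strategy of the cited proof. But as written your proposal has two genuine gaps. First, the step where $\partial_t X$ and $\partial_t T_0$ ``drop out'' is not a consequence of the critical-point property alone: the optimal isometric embedding is a critical point of the quasi-local \emph{energy}, i.e.\ of $E(\Sigma, X, T_0, T_0)$, so the first-variation identity kills variations of $(X,T_0)$ only in that quantity. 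For a rotation or boost Killing field $K\neq T_0$, the quantity $E(\Sigma, X, T_0, K)$ is not being extremized, and the statement that $O(r^{-1})$ perturbations of the pair $(X,T_0)$ do not change the limit is a separate, nontrivial invariance result (this is what the finiteness/well-definedness analysis behind \cite[Theorem 7.4]{Chen-Wang-Yau3} provides). You need to either invoke that result or redo the expansion showing that the subleading terms $(X^0)^{(-1)}$, $(X^i)^{(-1)}$, $a_i^{(-1)}$ enter only through total divergences on $S^2$ --- exactly as happens in Lemma \ref{evaluation_2} of this paper, where the $(X^0)^{(-1)}$ contribution to $\alpha_{H_0}^{(-2)}$ integrates to zero against the antisymmetrized rotation fields.

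Second, the two final identities are asserted rather than derived, and in particular you give no mechanism for where the factor $1/E$ comes from. It does not appear by ``matching coefficients with the ADM flux integrands''; it comes from the observer: the leading-order optimal isometric embedding equation \eqref{linearized_optimal} has a solvability condition that forces $T_0^{(0)}=(a^0,a^i)$ to satisfy $a^i/a^0=P^i/E$ (compare Lemma \ref{HA_optimal_expansion}, where $a^k/a^0=B_k/(4A)$ with $E=2A$, $P^k=B_k/2$), and the evolution of the boost conserved quantity picks up precisely the translation generator weighted by this unit observer. Relatedly, the conceptual reason $\partial_t C^i=P^i/E$ while $\partial_t J_i=0$ is that the leading part of the evolution is a unit time translation, which shifts the boost generator $x^0\partial_i+x^i\partial_0$ by $t\,\partial_i$ (whose charge is the linear momentum) but commutes with rotations; your proposal treats both cases as an undifferentiated cancellation computation, which obscures why one limit is nonzero and the other vanishes. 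Until the $r\to\infty$ bookkeeping in these two places is actually carried out, the proposal is a plausible plan rather than a proof: the ``technical heart'' you defer is the entire content of the theorem.
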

In fact, the dynamical formula was proved for initial data sets with much weaker asymptotics. See \cite[Theorem 10.2]{Chen-Wang-Yau3}.
\item The CWY quasi-local angular momentum and center of mass can also be used to define total conserved quantities for asymptotically hyperbolic initial data sets  \cite{Chen-Wang-Yau4}. They can be defined using the limit of quasi-local conserved quantities in the same fashion since the data on the coordinate spheres admit expansion of the same form. The main difference is that the defined total conserved quantities is finite only for the foliation with vanishing linear momentum. Assuming the total energy-momentum is timelike, there exists a foliation with vanishing linear momentum by applying coordinate boost. The limit of quasi-local conserved quantities is evaluated for such foliation in terms of the expansion of the metric and second fundamental form. See \cite[Theorem 7.3]{Chen-Wang-Yau4}.

\end{enumerate}
\section{harmonic asymptotic initial data sets}
In the remaining part of the article, we consider the total conserved quantities for harmonic asymptotic initial data sets.  Recall that 
\begin{definition}
An initial data set  $(M,g,k)$ is said to be of harmonic asymptotics if it is asymptotically flat and on each end there is a function $u$ and a vector field $Y$ on $M$ such that 
\[
\begin{split}
g_{ij} = & u^4 \delta_{ij}\\
k_{ij}= & u^2 (Y_{i, j}+Y_{j,i}-\frac{Y_{k,k}}{2} \delta_{ij})
\end{split}
\]
 where $u$ tends to $1$ and $Y$ tends to $0$ at infinity. 
\end{definition}
For simplicity, let
\[(\mathfrak{L} Y)=Y_{i, j}+Y_{j,i}-(Y_{k,k}) \delta_{ij}.\]

For a harmonic asymptotic initial data, the vacuum constraint equation is reduced to the following elliptic system for $u$ and $Y_i$:
\begin{equation}\label{vacuum_constraint}
\begin{split}
8\Delta_\delta u= &u(-|\mathfrak{L} Y|^2+\frac{1}{2} (Tr(\mathfrak{L}Y))^2)\\
\Delta_\delta Y_i= &2u^{-1}u_i Tr(\mathfrak{L}Y) -4u^{-1} u_j(\mathfrak{L}Y)_i^j,
\end{split}
\end{equation}
where $\Delta_\delta$ is the Laplacian with respect to the flat metric $\delta_{ij}$ on $\R^3$.

Suppose $u$ and $Y_i$ admit expansions of the following form:
\begin{equation}\label{expansion_u_Y}
\begin{split} u&=1+\frac{A}{r}+\frac{u^{(-2)}}{r^2}+o(r^{-2})\\
Y_i&=\frac{B_i}{r}+\frac{ {Y_i}^{(-2)}}{r^2}+o(r^{-2}).\end{split}\end{equation}
We show that $u^{(-2)}$ and $ Y_i^{(-2)}$ can be determined explicitly by \eqref{vacuum_constraint} in the following lemma.
\begin{lemma}\label{harmonic_expansion}
Let $(M,g,k)$ be a harmonic asymptotic initial data satisfying the vacuum constraint equation where $u$ and $Y$ have expansion as in equation \eqref{expansion_u_Y}. Then
\[
\begin{split}u^{(-2)}=& c_i \tilde X^i -\frac{9}{64}\sum_i (B_i)^2+\frac{1}{64}\sum_{i, j} B_i B_j \tilde{X}^i \tilde{X}^j \\
 Y_i^{(-2)}=&d_{ij} \tilde X^j-\frac{5}{2} B_i+\frac{1}{2}(\sum_k B_k \tilde X^k)\tilde{X}^i.
\end{split}
\]
\end{lemma}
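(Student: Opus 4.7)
The plan is to substitute the expansions \eqref{expansion_u_Y} into the constraint system \eqref{vacuum_constraint}, extract the coefficient of $1/r^4$ in each equation, and solve the resulting PDEs on $S^2$ by inverting $\widetilde{\Delta} + 2$. The key computational ingredient is the identity
\[
\Delta_\delta\bigl(h(\tilde X)/r^n\bigr) = \bigl(\widetilde{\Delta} h + n(n-1)\,h\bigr)/r^{n+2},
\]
which follows from the spherical decomposition of the flat Laplacian.

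First I would argue that $A$ and the $B_i$ are constants on $S^2$: since $|\mathfrak{L}Y|^2$, $\mathrm{Tr}(\mathfrak{L}Y)$, and all products appearing on the right-hand sides of \eqref{vacuum_constraint} are of order $r^{-4}$ or smaller, matching at order $r^{-3}$ forces $\widetilde{\Delta}A = 0$ and $\widetilde{\Delta}B_i = 0$. Consequently
\[
\Delta_\delta u = (\widetilde{\Delta}+2)u^{(-2)}/r^4 + o(r^{-4}), \qquad \Delta_\delta Y_i = (\widetilde{\Delta}+2)Y_i^{(-2)}/r^4 + o(r^{-4}).
\]
A direct calculation from $Y_i = B_i/r + O(r^{-2})$ gives, with $a = \sum_k B_k\tilde X^k$,
\[
(\mathfrak{L}Y)_{ij} = \frac{a\,\delta_{ij} - B_i\tilde X^j - B_j\tilde X^i}{r^2} + O(r^{-3}), \qquad \mathrm{Tr}(\mathfrak{L}Y) = \frac{a}{r^2} + O(r^{-3}),
\]
and contraction yields $|\mathfrak{L}Y|^2 = (2|B|^2 + a^2)/r^4 + O(r^{-5})$, where $|B|^2 = \sum_i B_i^2$. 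Similarly $u_j = -A\tilde X^j/r^2 + O(r^{-3})$, and the identity $\sum_j \tilde X^j(a\delta_i^j - B_i\tilde X^j - B_j\tilde X_i) = -B_i$, which uses $|\tilde X|^2 = 1$, evaluates $u_j(\mathfrak{L}Y)_i^j$ at leading order.

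Substituting into \eqref{vacuum_constraint} reduces both constraints to elliptic equations on $S^2$:
\[
(\widetilde{\Delta}+2)\,u^{(-2)} = -\tfrac{1}{4}|B|^2 - \tfrac{1}{16}a^2, \qquad (\widetilde{\Delta}+2)\,Y_i^{(-2)} = -2\bigl(a\tilde X^i + 2B_i\bigr),
\]
where the second equation carries an overall factor inherited from the leading coefficient of $u$. I would then invert $\widetilde{\Delta}+2$ via the spherical-harmonic decomposition: the operator acts as $+2$ on $\ell = 0$, vanishes on $\ell = 1$ (its kernel), and acts as $-4$ on $\ell = 2$. Using the algebraic identity $\tilde X^k\tilde X^l = \tfrac{1}{3}\delta^{kl} + (\tilde X^k\tilde X^l - \tfrac{1}{3}\delta^{kl})$ to split $a^2$ and $a\tilde X^i$ into $\ell = 0$ and $\ell = 2$ components, one inverts termwise; the $\ell = 1$ kernel produces the free constants $c_i\tilde X^i$ and $d_{ij}\tilde X^j$ in the claimed formulas.

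The proof is entirely computational, with no conceptual obstacle. The main place where care is required is the spherical-harmonic decomposition of the quadratic expressions $B_kB_l\tilde X^k\tilde X^l$ and $(\sum_k B_k\tilde X^k)\tilde X^i$, since any sign or fractional error at that step propagates directly into the explicit coefficients $-\tfrac{9}{64}$, $\tfrac{1}{64}$, $-\tfrac{5}{2}$, and $\tfrac{1}{2}$ in the final formulas.
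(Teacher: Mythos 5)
Your proposal follows the same route as the paper's proof: substitute the expansion into \eqref{vacuum_constraint}, match the $r^{-4}$ coefficients to obtain the two equations $(\widetilde{\Delta}+2)u^{(-2)}=-\frac{1}{4}\sum_i B_i^2-\frac{1}{16}(\sum_k B_k\tilde X^k)^2$ and $(\widetilde{\Delta}+2)Y_i^{(-2)}=-2\tilde X^i(\sum_k B_k\tilde X^k)-4B_i$ (identical to the paper's intermediate step), and invert $\widetilde{\Delta}+2$ by splitting the right-hand sides into their $\ell=0$ and $\ell=2$ spherical-harmonic components, with the $\ell=1$ kernel supplying $c_i\tilde X^i$ and $d_{ij}\tilde X^j$. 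The setup is correct and the termwise inversion you describe does reproduce the stated coefficients, so this is essentially the paper's own argument carried out in more detail.
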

\begin{proof}
Expanding $\Delta_\delta$ into spherical and radial parts and matching the leading coefficients of both sides of \eqref{vacuum_constraint} give the following equations for $u^{(-2)}$ and ${Y_i}^{(-2)}$:
\[\begin{split}(\tilde{\Delta}+2) u^{(-2)}&=\frac{-1}{16}(\sum_k B_k\tilde{X}^k)^2-\frac{1}{4}\sum_i(B_i)^2\\
(\tilde{\Delta}+2) {Y_i}^{(-2)}&=-2\tilde{X}^i (\sum_k B_k\tilde{X}^k)-4B_i.\end{split}\]

The right hand sides of the equations are linear combination of constant functions and $-6$ eigenfunctions. General solutions of the equations
are
\[
\begin{split}
u^{(-2)}=&-\frac{9}{64}\sum_i (B_i)^2+\frac{1}{64} B_i B_j \tilde{X}^i \tilde{X}^j +c_i\tilde{X}^i\\
{Y_i}^{(-2)}=&-\frac{5}{2} B_i+\frac{1}{2}( B_k \tilde{X}^k)\tilde{X}^i+d_{ij} \tilde{X}^j,
\end{split}
\] where $c_i$ and $d_{ij}$ are constants.
\end{proof}
\begin{remark}
For harmonic asymptotic initial data sets, the energy-momentum, center of mass and angular momentum are determined by the coefficient $A$, $B_i$, $c_i$ and $d_{ij}$ respectively. The ADM energy-momentum vector is $(2A,\frac{B_i}{2})$ and the ADM angular momentum and BORT center of mass are
\begin{equation}\label{adm_bort}
\begin{split}
C^i_{BORT} = & 2c^i\\
J^i _{ADM}= & \frac{\epsilon^{ijk}d_{jk}}{2}.
\end{split}
\end{equation}
The exact coefficients in the above formula are from the calculations in \cite{Huang2}.
\end{remark}
Next, we compute the data  $(\sigma,|H|, \alpha_H)$ for coordinate spheres of harmonic asymptotic initial data sets. Sometimes it is more convenient
to express $g$ and $k$ in terms of  the spherical coordinate system associated with the asymptotically flat coordinate coordinate on each end.
Therefore 
\[g=u^4(dr^2+r^2\tilde{\sigma}_{ab} du^a du^b)\] and 
\[k= k_{rr}dr^2+2k_{ra} dr du^a+k_{ab} du^b du^b.\]

First we work with for any
 conformally flat metric.
\begin{proposition}\label{conformal_data_expansion}
Suppose an initial data set $(M, g, k)$ has conformally flat metric $g=u^4\delta$, then the geometric data on $\Sigma_r$ are
\[\begin{split} \sigma&=u^4 r^2\tilde{\sigma}\\
|H|&=\sqrt{\bar{h}^2-p^2}\\
(\alpha_H)_a&=\frac{\bar{h} \partial_a p-p\partial_a \bar{h}}{\bar{h}^2-p^2}- u^{-2}k_{ra},
\end{split}\] where 
\[ \bar{h}=2r^{-1} u^{-2}-2\partial_r(u^{-2}) \,\,\, {\rm and} \,\,\, p=u^{-4}k_{ij}(\delta^{ij}-\tilde{X}^i\tilde{X}^j).\]
\end{proposition}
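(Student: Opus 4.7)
The plan is straightforward once a frame is fixed. The formula $\sigma = u^4 r^2\tilde\sigma$ is read off immediately by restricting $g = u^4(dr^2 + r^2\tilde\sigma_{ab}du^a du^b)$ to $\Sigma_r$. For $|H|$ and $\alpha_H$, I would decompose the rank-two Lorentzian normal bundle of $\Sigma_r$ in the spacetime using the orthonormal frame $\{\nu, T\}$, where $\nu = u^{-2}\partial_r$ is the outward unit normal of $\Sigma_r$ in $(M,g)$ and $T$ is the future-directed unit normal of $M$ in the spacetime. The mean curvature vector then splits as $H = \bar h\,\nu - p\,T$, with $\bar h$ the mean curvature of $\Sigma_r$ in $(M,g)$ and $p = \sigma^{ab}k_{ab}$ the trace of $k$ along $\Sigma_r$. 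Since $\nu$ is spacelike unit and $T$ is timelike unit, $|H|^2 = \bar h^2 - p^2$.

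The expression for $\bar h$ comes from the standard conformal-change identity for mean curvature applied to $g = e^{4\log u}\delta$: a coordinate sphere has Euclidean mean curvature $2/r$, so $\bar h = u^{-2}(2/r + 4u^{-1}\partial_r u)$, and the substitution $4u^{-3}\partial_r u = -2\partial_r(u^{-2})$ produces $\bar h = 2r^{-1}u^{-2} - 2\partial_r(u^{-2})$. For $p$ I use $p = \mathrm{tr}_g k - k(\nu,\nu)$: from $g^{ij} = u^{-4}\delta^{ij}$ one has $\mathrm{tr}_g k = u^{-4}\delta^{ij}k_{ij}$, and from $\nu = u^{-2}\tilde X^i\partial_i$ one has $k(\nu,\nu) = u^{-4}k_{ij}\tilde X^i\tilde X^j$; subtracting gives the stated formula for $p$.

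For $\alpha_H$, I introduce the hyperbolic boost angle $\phi$ with $\tanh\phi = p/\bar h$, so that $H/|H| = \cosh\phi\,\nu - \sinh\phi\,T$ and the Wang--Yau dual is $J/|H| = -\sinh\phi\,\nu + \cosh\phi\,T$, the future-directed unit timelike vector in the normal bundle that is orthogonal to $H/|H|$ and corresponds to reflecting $H$ through the incoming null direction. The normal connection is encoded by a single one-form $\omega$ via $\nabla^N_V\nu = \omega(V)T$ and $\nabla^N_V T = \omega(V)\nu$, and the standard identity $\omega(V) = \langle\nabla_V T,\nu\rangle = -k(V,\nu)$ specialises on $\Sigma_r$ to $\omega_a = -u^{-2}k_{ra}$. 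A direct expansion of $\nabla^N_V(J/|H|)$ in this frame collapses to a multiple of $H/|H|$, so that $\alpha_H(V) = \langle \nabla^N_V(J/|H|), H/|H|\rangle$ becomes the sum of $d\phi(V) = (\bar h\,\partial_V p - p\,\partial_V\bar h)/(\bar h^2 - p^2)$ and $\omega(V)$, producing the stated formula. The only nontrivial aspect is the sign bookkeeping in the Lorentzian normal bundle---the relative sign of $\bar h\nu$ and $pT$ in $H$, the choice of $J$ determined by the reflection through the incoming light cone, and the convention $k(X,Y) = -\langle\nabla_X T,Y\rangle$---which must all be fixed compatibly; once they are, each step is a one-line calculation.
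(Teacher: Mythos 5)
Your proposal is correct and follows essentially the same route as the paper: the same frame $\{\nu,T\}$ with $\nu=u^{-2}\partial_r$, the same quantities $\bar h$ and $p$ (the paper gets $\bar h$ from $u^{-2}\partial_r\ln\sqrt{\det\sigma_{ab}}$ and $p$ from $\sigma^{ab}k_{ab}$, which coincide with your conformal-change and trace-splitting computations). The only difference is that where the paper simply cites \cite{Wang-Yau3} for the identity $(\alpha_H)_a=-k(e_3,\partial_a)+\partial_a\sinh^{-1}\bigl(p/\sqrt{\bar h^2-p^2}\bigr)$, you rederive it from the definition of $\alpha_H$ via the boost angle $\phi$ with $\tanh\phi=p/\bar h$, i.e.\ $\phi=\sinh^{-1}\bigl(p/\sqrt{\bar h^2-p^2}\bigr)$, so $d\phi$ is exactly the quotient term in the stated formula; this is the same identity, just unpacked.
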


\begin{proof}
The unit outward normal in the spatial direction is $e_3=u^{-2} \frac{\partial}{\partial r}$ and
the mean curvature of $\Sigma_r$ in $M$ is
\[\bar{h}=u^{-2}\partial_r \ln \sqrt{\det\sigma_{ab}},\] where
$\sqrt{\det\sigma_{ab}}=u^4 r^2\sqrt{\det {\tilde\sigma_{ab}}}$.

On the other hand, $p=\sigma^{ab}k_{ab}$ where $k_{ab}= r^2 k_{ij}(\partial_a \tilde X^i)(\partial_b \tilde X^j)$.  Thus 
\[p=u^{-4} r^{-2}\tilde{\sigma}^{ab} r^2 k_{ij}(\partial_a \tilde{X}^i)(\partial_b \tilde{X}^j)=u^{-4}k_{ij}(\delta^{ij}-\tilde{X}^i\tilde{X}^j).\]

Recall from \cite{Wang-Yau3},
\[(\alpha_H)_a= -k(e_3, \partial_a) + \partial_a \sinh^{-1} \frac{p}{\sqrt{\bar h^2 - p^2}}.\] 
Using $e_3=u^{-2} \frac{\partial}{\partial r}$ again, we conclude
\[(\alpha_H)_a=\frac{\bar{h} \partial_a p-p\partial_a \bar{h}}{\bar{h}^2-p^2}- u^{-2}k_{ra}.\] 
\end{proof}

\begin{proposition}
On a harmonic asymptotic initial data set with the expansion \eqref{expansion_u_Y}, the data on $\Sigma_r$ are
\[\begin{split} \sigma&=r^2\tilde{\sigma}_{ab}+4Ar\tilde{\sigma}_{ab}+(4u^{(-2)}+6A^2)\tilde{\sigma}_{ab} + o(1)\\
|H|&=\frac{2}{r}-8Ar^{-2} +r^{-3}[-12u^{(-2)}+18A^2-\frac{1}{4}(\sum_k B_k \tilde{X}^k)^2] + o(r^{-3})\\
(\alpha_H)_a&=\frac{3}{2}r^{-1}\partial_a(\sum_k B_k \tilde{X}^k)-r^{-2}[A\sum_k 6 B_k\partial_a \tilde{X}^k-\frac{5}{2}d_{ij} (\partial_a \tilde{X}^i)\tilde{X}^j+\frac{1}{2}d_{ij}\tilde{X}^i\partial_a \tilde{X}^j] + o(r^{-2}).
\end{split}\] 
\end{proposition}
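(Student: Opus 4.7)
The plan is to apply Proposition \ref{conformal_data_expansion} with the specific harmonic-asymptotic ansatz and expand all of the ingredients to the required order in $r^{-1}$.

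First I would expand the conformal factor: from $u = 1 + A/r + u^{(-2)}/r^2 + o(r^{-2})$ we get $u^4 = 1 + 4A/r + (4u^{(-2)} + 6A^2)/r^2 + o(r^{-2})$, and the expression $\sigma_{ab}=u^4 r^2\tilde\sigma_{ab}$ from Proposition \ref{conformal_data_expansion} immediately yields the stated expansion of $\sigma$. Next I would expand $u^{-2} = 1 - 2A/r + (3A^2 - 2u^{(-2)})/r^2 + o(r^{-2})$ and $\partial_r u^{-2}$, then substitute into $\bar h = 2r^{-1}u^{-2} - 2\partial_r u^{-2}$. This gives $\bar h = 2/r - 8A/r^2 + (18A^2 - 12u^{(-2)})/r^3 + o(r^{-3})$.

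For $p$ and $|H|$, I would compute $(\mathfrak L Y)_{ij}$ from the expansion of $Y_i$. Since $\partial_j(B_i/r) = -B_i\tilde X^j/r^2$ and $\partial_j(B_k/r)\delta^{kj} = -B_k\tilde X^k/r^2$, the leading part of $k_{ij}$ is order $r^{-2}$ with explicit coefficient depending on $B_i$ and $\tilde X^i$. Contracting with $\delta^{ij} - \tilde X^i\tilde X^j$ (using $\tilde X^i\tilde X^i = 1$) gives $p = (B_k\tilde X^k)/r^2 + O(r^{-3})$. Then $|H| = \bar h\sqrt{1 - p^2/\bar h^2} = \bar h - p^2/(2\bar h) + \cdots$; since $p^2 = O(r^{-4})$ and $\bar h^{-1} = r/2 + O(1)$, the term $-p^2/(2\bar h)$ contributes exactly $-(B_k\tilde X^k)^2/(4r^3)$, matching the claimed expansion of $|H|$.

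For $\alpha_H$, I would carefully expand $k_{ra} = r k_{ij}\tilde X^i\partial_a \tilde X^j$, using $\tilde X^j\partial_a\tilde X^j = 0$ to kill several terms at leading order and produce $-u^{-2}k_{ra} = B_k\partial_a\tilde X^k/r + O(r^{-2})$. Combining with $(\bar h\partial_a p - p\partial_a \bar h)/(\bar h^2 - p^2)$, whose leading contribution is $\frac{1}{2r}\partial_a(B_k\tilde X^k)$, I obtain the stated $r^{-1}$ coefficient $\frac{3}{2}\partial_a(\sum_k B_k\tilde X^k)$. For the $r^{-2}$ coefficient I would push each expansion one order further, now invoking Lemma \ref{harmonic_expansion} to substitute the explicit form of $Y^{(-2)}_i = d_{ij}\tilde X^j - \frac52 B_i + \frac12(B_k\tilde X^k)\tilde X^i$; the $d_{ij}$ terms enter through $\partial_j Y_i^{(-2)}/r^3$ in $(\mathfrak L Y)$, while the cross term $A\cdot B_k$ appears through the product of the $u^2$ factor in $k_{ij}$ with the leading $Y$-expansion.

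The main obstacle will be the $r^{-2}$ coefficient of $\alpha_H$: it requires simultaneously tracking (i) the next order of $p$ coming from both $u^2$ and $Y^{(-2)}_i$, (ii) the angular-dependent $r^{-3}$ part of $\bar h$ coming from $u^{(-2)}$, and (iii) the $r^{-2}$ part of $k_{ra}$. The delicate point, and the sanity check, is that the $c_i\tilde X^i$ contribution to $u^{(-2)}$ must drop out, since the stated answer involves only $A$, $B_k$, and $d_{ij}$; this cancellation is where I would spend the most care, relying on the identity $\tilde X^j\partial_a\tilde X^j = 0$ and integration-by-parts-type rearrangements on $S^2$, together with the fact that a gradient contribution $\partial_a(\cdot)$ from the $c_i$ terms should cancel between the two pieces of the $(\bar h\partial_a p - p\partial_a\bar h)/(\bar h^2 - p^2)$ expression and $-u^{-2}k_{ra}$.
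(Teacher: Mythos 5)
Your plan follows the paper's proof essentially verbatim: apply Proposition \ref{conformal_data_expansion}, expand $u^4$, $u^{-2}$, $\bar{h}$, $p$, and $u^{-2}k_{ra}$ to the required orders, and substitute $Y_i^{(-2)}$ from Lemma \ref{harmonic_expansion} at the end. The one caveat is that the ``delicate cancellation'' of the $c_i$ terms you anticipate in $(\alpha_H)_a$ is a non-issue rather than a hurdle: $u^{(-2)}$ enters $\bar{h}$ only at order $r^{-3}$ (and enters $p$ and $u^{-2}k_{ra}$ only beyond the orders needed), so it first affects $\alpha_H$ at order $r^{-3}$, and since $\alpha_H$ is a pointwise one-form, no integration by parts on $S^2$ is involved.
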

\begin{proof}
The induced metric is $\sigma_{ab}=u^4r^2\tilde{\sigma}_{ab}$.  Therefore 
\[\sigma_{ab}=r^2\tilde{\sigma}_{ab}+4Ar\tilde{\sigma}_{ab}+(4u^{(-2)}+6A^2)\tilde{\sigma}_{ab}+ o(1).\]

We recall that from Proposition \ref{conformal_data_expansion}, 
$ |H| = \sqrt{\bar{h}^2-p^2} $
where
\[\begin{split}\bar{h}&=2r^{-1} u^{-2}-2\partial_r(u^{-2})\\
p&=u^{-4}k_{ij}(\delta^{ij}-\tilde{X}^i\tilde{X}^j).\end{split}\]
For a harmonic asymptotic initial data set, we have
\[\bar{h}=\frac{2}{r}-r^{-2}(8A)+r^{-3}(-12u^{(-2)}+18A^2) + o(r^{-3}).\]
Therefore, $\bar{h}^{(-2)}= -8A$ and $\bar{h}^{(-3)}=-12u^{(-2)}+18A^2$. Let
\[p=r^{-2} p^{(-2)}+r^{-3}p^{(-3)}+o(r^{-3}).\]
We have
\[
\begin{split}
Y_{i, j}&=r^{-2}(-B_i \tilde{X}^j)+r^{-3}[-2 Y_i^{(-2)}\tilde{X}^j+\tilde{\nabla}Y_i^{(-2)} \cdot \tilde{\nabla}\tilde{X}^j] + o(r^{-3}).
\end{split}
\]
and
\[\begin{split}p&=-u^{-2}(2 Y_{i, j}\tilde{X}^i \tilde{X}^j-\sum_k Y_{k, k})+o(r^{-3})\\
&=-u^{-2}[r^{-2}(-\sum_k (B_k \tilde{X}^k)+r^{-3}(-2Y_i^{(-2)}\tilde{X}^i-\tilde{\nabla}Y_i^{(-2)}\tilde{\nabla}\tilde{X}^i)]+ o(r^{-3})\\
&=r^{-2}\sum_k (B_k \tilde{X}^k)+r^{-3}[2Y_i^{(-2)}\tilde{X}^i+\tilde{\nabla}Y_i^{(-2)}\tilde{\nabla}\tilde{X}^i-2A\sum_k (B_k \tilde{X}^k)]+ o(r^{-3}).\end{split}\]

Therefore $p^{(-2)}=\sum_k (B_k \tilde{x}^k)$ and 
\[p^{(-3)}=2Y_i^{(-2)}\tilde{X}^i+\tilde{\nabla}Y_i^{(-2)}\tilde{\nabla}\tilde{X}^i-2A\sum_k (B_k \tilde{X}^k).\]
We obtain 
\[
\begin{split}
|H|=&\frac{2}{r}+r^{-2} \bar{h}^{(-2)}+r^{-3}[\bar{h}^{(-3)}-\frac{1}{4}(p^{(-2)})^2]+ o(r^{-3}) \\
=&\frac{2}{r}-8Ar^{-2} +r^{-3}[-12u^{(-2)}+18A^2-\frac{1}{4}(\sum_k B_k \tilde{x}^k)^2]+ o(r^{-3}).
\end{split}
\]

Finally, we compute $\alpha_H$. In general, with \[
\begin{split}
\bar{h}= &\frac{2}{r}+r^{-2}\bar{h}^{(-2)}+r^{-3}\bar{h}^{(-3)} + o(r^{-3})\\ 
p=&r^{-2} p^{(-2)}+r^{-3}p^{(-3)}+o(r^{-3}),
\end{split}
\] we have 
\[(\alpha_H)_a=\frac{\bar{h} \partial_a p-p\partial_a \bar{h}}{\bar{h}^2-p^2}- u^{-2}k_{ra}. \]

For harmonic asymptotic initial data sets, $\bar{h}^{(-2)}=-8A$, $p^{(-2)}=\sum_k (B_k \tilde{X}^k) 
$ and 
\[ p^{(-3)}=2Y_i^{(-2)}\tilde{X}^i+\tilde{\nabla}Y_i^{(-2)}\cdot \tilde{\nabla}\tilde{x}^i-2A\sum_k (B_k \tilde{X}^k). \] 
Hence,
\[
\begin{split}
\frac{\bar{h} \partial_a p-p\partial_a \bar{h}}{\bar{h}^2-p^2}=&\frac{1}{2}r^{-1}\partial_a p^{(-2)}+\frac{1}{4}r^{-2}[ 2\partial_a p^{(-3)}-h^{(-2)}\partial_a p^{(-2)}-\partial_a \bar{h}^{(-2)}p^{(-2)}] +o(r^{-2})\\
=&\frac{1}{2}r^{-1}\partial_a p^{(-2)}+r^{-2}( \frac{1}{2}\partial_a p^{(-3)}+2A \partial_a p^{(-2)})+o(r^{-2}) \\
 u^{-2} k_{ra}=& r^{-1}(-\sum_j B_j \partial_a \tilde{X}^j)+r^{-2}(\tilde{X}^i \partial_a Y_i^{(-2)}-2Y_i^{(-2)}\partial_a\tilde{X}^i)+o(r^{-2})
\end{split}
\]
and
\[(\alpha_H)_a=\frac{3}{2}r^{-1}\partial_a(\sum_k B_k \tilde{X}^k)+r^{-2}[A\sum_k \partial_a (B_k \tilde{X}^k)+3
Y_i^{(-2)}\partial_a\tilde{X}^i+\frac{1}{2}\partial_a( \tilde{\nabla}Y_i^{(-2)}\cdot \tilde{\nabla}\tilde{X}^i)] + o(r^{-3}).\]

Using Lemma \ref{harmonic_expansion}, we have \[3Y_i^{(-2)}\partial_a\tilde{X}^i+\frac{1}{2}\partial_a( \tilde{\nabla}Y_i^{(-2)}\cdot \tilde{\nabla}\tilde{X}^i)=-7 B_k\partial_a \tilde{X}^k+\frac{5}{2}d_{ij} (\partial_a \tilde{X}^i)\tilde{X}^j-\frac{1}{2}d_{ij}\tilde{X}^i\partial_a \tilde{X}^j.\]
\end{proof}

\section{CWY total conserved quantities for  harmonic asymptotic initial data}
In this section, we evaluate the CWY total conserved quantities for harmonic asymptotic initial data sets. 
\begin{lemma} \label{HA_optimal_expansion}
For a harmonic asymptotic initial data set with the expansion \eqref{expansion_u_Y}, the solution of the optimal isometric embedding equation \eqref{optimal} has the following
expansion:
\[
\begin{split}
X^i = &  r \tilde X^i + 2A \tilde{X}^i +r^{-1}(2u^{(-2)}+A^2)\tilde{X}^i + o(r^{-1}) \\ 
X^ 0 = & r^{-1}(X^0)^{(-1)}+ o(r^{-1})\\
T_0 = &(a^0,a^i) + r^{-1}(a_0^{(-1)},a_i^{(-1)})+o(r^{-1})
\end{split}
\]
where 
\[\frac{a^k}{a^0}=\frac{B_k}{4A}.\]
\end{lemma}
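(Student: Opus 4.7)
The plan is to substitute the ansatz \eqref{optimal_order} into the isometric embedding equations \eqref{isometric1}--\eqref{isometric2} and the linearized optimal isometric embedding equation \eqref{linearized_optimal}, then match orders in $r^{-1}$. Since \cite{Chen-Wang-Yau1} provides existence and uniqueness of the optimal isometric embedding up to rigid motions of $\R^{3,1}$, it suffices to verify that the stated expansion solves the equations in a natural gauge where $(X^0)^{(0)}=0$ and the embedding is centered at the origin.

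The first step is to solve \eqref{isometric1}. From the preceding Proposition, $\sigma_{ab}^{(1)}=4A\tilde{\sigma}_{ab}$, and substituting $(X^i)^{(0)}=2A\tilde{X}^i$ into the left-hand side gives $4A\,\partial_a\tilde{X}^i\partial_b\tilde{X}^i=4A\tilde{\sigma}_{ab}$, which matches. The residual ambiguity (additive constants and infinitesimal $S^2$-rotations) is killed by centering.

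The main step, and the main obstacle, is \eqref{linearized_optimal}, which determines $(X^0)^{(0)}$ together with the constant part of $T_0$. It requires computing $\rho^{(-2)}$, and hence the mean curvature norm $|H_0|$ of the reference image in $\R^{3,1}$. With the leading-order embedding $X=\left(0,(r+2A)\tilde{X}^i\right)+O(r^{-1})$, the image is to this order a round sphere of radius $r+2A$ in the spatial slice $\{x^0=0\}$, so
\[|H_0|=\frac{2}{r+2A}=\frac{2}{r}-\frac{4A}{r^2}+O(r^{-3}),\]
giving $h_0^{(-2)}=-4A$. Writing $\rho$ as
\[\rho=\frac{|H_0|^2-|H|^2}{\left(\sqrt{|H_0|^2+U}+\sqrt{|H|^2+U}\right)\sqrt{1+|\nabla\tau|^2}},\qquad U=\frac{(\Delta\tau)^2}{1+|\nabla\tau|^2},\]
and using the unit-norm identity $(a^0)^2-\sum_i(a^i)^2=1$ to simplify the denominator, one obtains $\rho^{(-2)}=(h_0^{(-2)}-h^{(-2)})/a^0=4A/a^0$, a constant on $S^2$. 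Combined with $\alpha_H^{(-1)}=\tfrac{3}{2}d\!\left(\sum_k B_k\tilde{X}^k\right)$ from the preceding Proposition, and noting that $\tau^{(1)}=-\sum_i a^i\tilde{X}^i$ is a $-2$ eigenfunction of $\widetilde{\Delta}$, the first two terms of \eqref{linearized_optimal} combine to $(12A/a^0)\sum_i a^i\tilde{X}^i$ and the last to $-3\sum_k B_k\tilde{X}^k$. Projection onto the $\ell=1$ harmonics gives $12A(a^i/a^0)=3B_i$, i.e.\ $a^i/a^0=B_i/(4A)$. The residual $\ell\in\{0,1\}$ freedom in $(X^0)^{(0)}$ lies in the kernel of $\widetilde{\Delta}(\widetilde{\Delta}+2)$ and corresponds to the time-translation/boost gauge, fixed by setting $(X^0)^{(0)}=0$.

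Finally, I solve \eqref{isometric2}. With $(X^0)^{(0)}=0$, $(X^i)^{(0)}=2A\tilde{X}^i$ and $\sigma_{ab}^{(0)}=(4u^{(-2)}+6A^2)\tilde{\sigma}_{ab}$, the equation reduces to $\partial_a\tilde{X}^i\partial_b(X^i)^{(-1)}+\partial_a(X^i)^{(-1)}\partial_b\tilde{X}^i=(4u^{(-2)}+2A^2)\tilde{\sigma}_{ab}$, and the radial ansatz $(X^i)^{(-1)}=(2u^{(-2)}+A^2)\tilde{X}^i$ verifies it (using $\tilde{X}^i\partial_a\tilde{X}^i=0$ to absorb the angular dependence of the scalar coefficient). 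The delicate point throughout is the algebraic simplification of $\rho^{(-2)}$, for which the on-shell identity $(a^0)^2=1+\sum_i(a^i)^2$ is essential.
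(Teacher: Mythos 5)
Your argument is correct and follows essentially the same route as the paper: solve \eqref{isometric1} to get $(X^i)^{(0)}=2A\tilde X^i$, extract $h_0^{(-2)}=-4A$ and $\rho^{(-2)}=4A/a^0$, obtain $a^k/a^0=B_k/(4A)$ as the solvability (kernel of $\widetilde\Delta(\widetilde\Delta+2)$) condition for \eqref{linearized_optimal} with $(X^0)^{(0)}=0$, and then read off $(X^i)^{(-1)}$ from \eqref{isometric2}. The only cosmetic difference is that you compute $h_0^{(-2)}$ by recognizing the leading image as a round sphere of radius $r+2A$, whereas the paper uses the formula $h_0^{(-2)}=-\tilde X^i\widetilde\Delta(X^i)^{(0)}-\tilde\sigma^{ab}\sigma_{ab}^{(1)}$; both give the same value.
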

\begin{remark}
The terms $(X^0)^{(-1)}$ and $a_0^{(-1)}, a_i^{(-1)}, i=1, 2, 3$  can be solved from the second order terms of the optimal isometric embedding equation. However, the total conserved quantities do not depend on them.
\end{remark}
\begin{proof}
First we solve $(X^i)^{(0)}$ from equation \eqref{isometric1}. For a harmonic asymptotic initial data, the equation reads
\[
\partial_a \tilde{X}^i\partial_b (X^i)^{(0)}+\partial_a (X^i)^{(0)}\partial_b \tilde{X}^i=4A \tilde \sigma_{ab}
\]
and we have $ (X^i)^{(0)}=2A \tilde{X}^i$. Since  $h_0^{(-2)}$ is given by
\[ h_0^{(-2)}=-\tilde{X}^i \tilde{\Delta}(X^i)^{(0)}-\tilde{\sigma}^{ab}\sigma_{ab}^{(1)}, \]  we compute $h_0^{(-2)}=-4A$. On the other hand, $h^{(-2)}=-8A$. 
We also have 
\[(\alpha_H)^{(-1)}=\frac{3}{2}\partial_a (B^k\tilde{X}^k).\]

Next we  solve $(X^0)^{(0)}$ from  equation \eqref{linearized_optimal}.  With harmonic asymptotics, we have  $\tau^{(1)} =-\sum_{i=1}^{3} a_{i} \tilde X^i$ and
\[ 
\rho^{(-2)}=\frac{h_0^{(-2)}-h^{(-2)}}{a^0}=\frac{4A}{a^0}.
\]

Equation  \eqref{linearized_optimal} becomes

\[3(\frac{4A}{a^0} a^k-B^k)\tilde{X}^k-\frac{1}{2}\widetilde{\Delta}(\widetilde{\Delta}+2)(X^0)^{(0)}=0.\] The solvability condition demands that
 $\frac{a^k}{a^0}=\frac{B_k}{4A}$ and $(X^0)^{(0)}$ can be chosen to be zero. 

Next, we solve $(X^i)^{(-1)}$  from equation \eqref{isometric2}. With harmonic asymptotics, the equation becomes

\begin{equation*}\partial_a \tilde{X}^i\partial_b (X^i)^{(-1)}+\partial_a (X^i)^{(-1)}\partial_b \tilde{X}^i= 
\partial_a (X^0)^{(0)}\partial_b (X^0)^{(0)}+(4u^{(-2)}+2A^2)\tilde{\sigma}_{ab}\end{equation*}

With $(X^0)^{(0)}=0$, we can take $(X^i)^{(-1)}=(2u^{(-2)}+A^2)\tilde{X}^i$. 
\end{proof}

We have the following expansion for the data on  $X_r(\Sigma_r)$ obtained in Lemma \ref{HA_optimal_expansion}.

\begin{lemma}
For the solution of the optimal embedding equation obtained in  Lemma \ref{HA_optimal_expansion}, 
\[
\begin{split}
|H_0| = & \frac{2}{r}-4Ar^{-2}+r^{-3}(-2\tilde{\Delta} u^{(-2)}-4u^{(-2)}+6A^2) + o(r^{-3}) \\
(\alpha_{H_0})_a= & r^{-2}[\partial_a (X^0)^{(-1)}+\frac{1}{2}\partial_a(\tilde{\Delta} (X^0)^{(-1)}] + o(r^{-2}).
\end{split}
\]
\end{lemma}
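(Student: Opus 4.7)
My plan is to insert the data from Lemma \ref{HA_optimal_expansion} into the standard expansion formulas for the mean curvature norm $|H_0|$ and connection one-form $\alpha_{H_0}$ of the reference image surface $X(\Sigma_r)$ in $\mathbb{R}^{3,1}$. These general formulas are obtained in the same spirit as Proposition \ref{conformal_data_expansion}: viewing $X(\Sigma_r)$ as a spacelike 2-surface in Minkowski, decompose its mean curvature vector with respect to an orthonormal normal frame $(n_3,n_4)$, with $n_3$ the spatial outward unit normal and $n_4$ the future timelike unit normal, so that $|H_0|^2=\bar h_0^2-p_0^2$, and $\alpha_{H_0}$ is built from the connection coefficient $-\langle\nabla^{\perp} n_3,n_4\rangle$ together with $d\sinh^{-1}(p_0/|H_0|)$.

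First I would record the general coefficients. The expression
\[ h_0^{(-2)} = -\tilde X^i \tilde{\Delta}(X^i)^{(0)} - \tilde{\sigma}^{ab}\sigma_{ab}^{(1)} \]
was already used in Lemma \ref{HA_optimal_expansion}; its analogue for $h_0^{(-3)}$ involves $(X^i)^{(-1)}$, $\sigma_{ab}^{(0)}$, and quadratic couplings of first-order pieces, while $(\alpha_{H_0})^{(-1)}$ and $(\alpha_{H_0})^{(-2)}$ are controlled by $(X^0)^{(0)}$ and $(X^0)^{(-1)}$ respectively. Since Lemma \ref{HA_optimal_expansion} gives $(X^i)^{(0)}=2A\tilde X^i$, $(X^0)^{(0)}=0$, and $(X^i)^{(-1)}=(2u^{(-2)}+A^2)\tilde X^i$, and the earlier computation of the induced metric gives $\sigma_{ab}^{(1)}=4A\tilde{\sigma}_{ab}$ and $\sigma_{ab}^{(0)}=(4u^{(-2)}+6A^2)\tilde{\sigma}_{ab}$, the identity $\tilde{\Delta}\tilde X^i = -2\tilde X^i$ immediately recovers $h_0^{(-2)}=-4A$ and, at one order higher, assembles the three contributions $-2\tilde{\Delta}u^{(-2)}$ (from $\tilde X^i\tilde{\Delta}((2u^{(-2)}+A^2)\tilde X^i)$), $-4u^{(-2)}$ (from $\tilde{\sigma}^{ab}\sigma_{ab}^{(0)}$), and $+6A^2$ (from the quadratic mixing of the $(X^i)^{(0)}$ and $\sigma_{ab}^{(1)}$ pieces) into the claimed $r^{-3}$-coefficient of $|H_0|$. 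Note that $p_0$ is of order $r^{-3}$ (essentially $-r^{-3}\tilde{\Delta}(X^0)^{(-1)}$) so $p_0^2$ affects $|H_0|^2$ only at order $r^{-6}$ and is invisible at the orders displayed.

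For $(\alpha_{H_0})_a$, the vanishing of $(X^0)^{(0)}$ explains the absence of any $r^{-1}$ term. At order $r^{-2}$, the piece $\partial_a(X^0)^{(-1)}$ arises from the time component of the ambient connection acting on $n_3$ (analogous to the $-u^{-2}k_{ra}$ term in Proposition \ref{conformal_data_expansion}, but now generated by $X^0$ itself), while the piece $\tfrac{1}{2}\partial_a\tilde{\Delta}(X^0)^{(-1)}$ comes from the exterior derivative of $\sinh^{-1}(p_0/|H_0|)$: with $p_0\approx -r^{-3}\tilde{\Delta}(X^0)^{(-1)}$ and $|H_0|\approx 2/r$, one has $\sinh^{-1}(p_0/|H_0|)\approx -\tfrac{1}{2}r^{-2}\tilde{\Delta}(X^0)^{(-1)}$, whose $u^a$-derivative produces the desired contribution up to an overall sign matching the convention.

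The main obstacle is the bookkeeping for the order-$r^{-3}$ coefficient of $\bar h_0$: the cross-terms between the $O(r^{-1})$ perturbation of the radial factor $\rho=r+2A+r^{-1}(2u^{(-2)}+A^2)+o(r^{-1})$ and the $O(r)$ correction $\sigma_{ab}^{(1)}$ in the induced metric mix in a non-obvious way, and producing exactly $-2\tilde{\Delta}u^{(-2)}-4u^{(-2)}+6A^2$ requires tracking all three sources simultaneously and checking that spurious terms involving $\tilde X^i\tilde X^j$ traces cancel via $\tilde{\Delta}\tilde X^i=-2\tilde X^i$.
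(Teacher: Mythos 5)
Your overall strategy---treating $X(\Sigma_r)\subset\mathbb{R}^{3,1}$ as a graph, splitting $H_0$ into $\bar h_0$ and $p_0$ with respect to a normal frame, and noting that $p_0=O(r^{-3})$ so that $p_0^2$ cannot affect $|H_0|$ at the displayed orders---is viable, and your account of $(\alpha_{H_0})_a$ (no $r^{-1}$ term because $(X^0)^{(0)}=0$; the two surviving terms coming from the frame connection and from $d\,\sinh^{-1}(p_0/|H_0|)$ with $p_0\approx r^{-3}\tilde{\Delta}(X^0)^{(-1)}$ and $|H_0|\approx 2/r$) is essentially what the paper means when it says this expansion ``follows from $(X^0)^{(0)}=0$.'' The gap is in the first half: the coefficient $h_0^{(-3)}$ is the entire content of the lemma, and you never write down the second-order formula from which it is supposed to follow. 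You assert that three contributions ``assemble'' into $-2\tilde{\Delta}u^{(-2)}-4u^{(-2)}+6A^2$, but the attributions you give do not add up under the natural reading of your sketch. For instance, $-\sum_i\tilde{X}^i\tilde{\Delta}\bigl((2u^{(-2)}+A^2)\tilde{X}^i\bigr)$ is not $-2\tilde{\Delta}u^{(-2)}$: expanding the Laplacian of the product and using $\tilde{\Delta}\tilde{X}^i=-2\tilde{X}^i$ and $\sum_i\tilde{X}^i\tilde{\nabla}\tilde{X}^i=0$ gives $-2\tilde{\Delta}u^{(-2)}+4u^{(-2)}+2A^2$; combined with $-\tilde{\sigma}^{ab}\sigma_{ab}^{(0)}=-8u^{(-2)}-12A^2$ this already forces your remaining ``quadratic mixing'' term to contribute $+16A^2$, not the $+6A^2$ you assign to it. Your closing paragraph concedes that this bookkeeping is the unresolved obstacle, so the argument stops exactly where the lemma has to be proved.

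There is a much shorter route, and it is the one the paper takes: for an isometric embedding the mean curvature vector is $H_0=(\Delta X^0,\Delta X^i)$ with $\Delta$ the Laplacian of the induced metric, and for harmonic asymptotics the induced metric is exactly conformally round, $\sigma=u^4r^2\tilde{\sigma}$, so $\Delta=u^{-4}r^{-2}\tilde{\Delta}$ with $u^{-4}=1-4Ar^{-1}+(10A^2-4u^{(-2)})r^{-2}+o(r^{-2})$. Applying this to $X^i=\bigl(r+2A+r^{-1}(2u^{(-2)}+A^2)\bigr)\tilde{X}^i$, summing squares, and taking the square root yields the stated expansion of $|H_0|$ in a few lines, with no frame decomposition and no general second-order mean-curvature formula needed; the identity $\Delta X^0=r^{-3}\tilde{\Delta}(X^0)^{(-1)}+o(r^{-3})$ then confirms your observation that the timelike component enters $|H_0|^2$ only at order $r^{-6}$. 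I would recommend either adopting this computation or, if you keep the frame approach, writing out the full $r^{-3}$ coefficient of $\bar h_0$ explicitly and verifying the cancellations rather than asserting them.
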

\begin{proof}
We compute $H_0= (\Delta X^0,\Delta X^i )$,
where $\Delta X^0=\frac{\tilde \Delta  (X^0)^{(-1)}}{r^3}$ and
\[
\begin{split}
\Delta X^i= & r^{-1}\tilde{\Delta} (X^i)^{(1)}+ r^{-2}  [\tilde{\Delta} (X^i)^{(0)}+ \Delta^{(-3)}(X^i)^{(1)} ] \\
&+r^{-3}  [\tilde{\Delta} (X^i)^{(-1)}+ \Delta^{(-4)}(X^i)^{(1)}  + \Delta^{(-3)}(X^i)^{(0)} ] +o(r^{-3}).
\end{split}
\]

For harmonic asymptotic initial data sets,  $\sigma_{ab}=u^4 r^2\tilde{\sigma}_{ab}$ and
$\Delta=u^{-4} r^{-2}\tilde{\Delta}$. Therefore 
\[
\begin{split}
\Delta^{(-3)}=&-4A\tilde{\Delta}\\
\Delta^{(-4)}=&(10A^2-4u^{(-2)})\tilde{\Delta}.
\end{split}
\]
Using the expansion of $X^i$ from Lemma \ref{HA_optimal_expansion}, we have
\[|H_0|=\frac{2}{r}-4Ar^{-2}+r^{-3}(-2\tilde{\Delta} u^{(-2)}-4u^{(-2)}+6A^2).\]
The expansion for $(\alpha_{H_0})_a$ follows from $(X^0)^{(0)}=0$.
\end{proof}

Next, we compute the expansion for $\rho$ and $j_a$ in the following lemmas.
\begin{lemma} \label{HA_expansion_rho}
For the solution of the optimal embedding equation obtained in  Lemma \ref{HA_optimal_expansion}, we have $\rho = r^{-2} \rho^{(-2)} +r^{-3} \rho^{(-3)} + o(r^{-3})$ where $ \rho^{(-2)}  = \frac{4A}{a^0}$ and
\[  a_0^3 \rho^{(-3)}=  a_0^2(-\frac{19}{16}\sum_i B_i^2+\frac{9}{16}(\sum B_i\tilde{X}^i)^2+12 c_i\tilde{X}^i-12A^2)-4A^2(\sum_i a_i \tilde{X}^i)^2-4A\sum_i a_i a_i^{(-1)} .  \]
\end{lemma}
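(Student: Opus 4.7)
\emph{Proof plan.} The plan is to use the rationalized form
\[
\rho = \frac{|H_0|^2 - |H|^2}{\sqrt{(1+|\nabla\tau|^2)|H_0|^2 + (\Delta\tau)^2} + \sqrt{(1+|\nabla\tau|^2)|H|^2 + (\Delta\tau)^2}},
\]
obtained by multiplying the defining expression for $\rho$ by its conjugate and absorbing the outer $\sqrt{1+|\nabla\tau|^2}$ into each inner square root. This converts the near-cancellation $\sqrt{\cdot}-\sqrt{\cdot}$ into a quotient that can be expanded order by order using only finitely many coefficients from the already-computed expansions of $|H|$, $|H_0|$, $\sigma$, and the embedding.

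The first step is to expand $\tau = -\langle X, T_0 \rangle$. Substituting the expansions of $X^0$, $X^i$, $T_0$ from Lemma \ref{HA_optimal_expansion} yields $\tau = r\tau^{(1)} + \tau^{(0)} + O(r^{-1})$ with $\tau^{(1)} = -\sum_i a^i \tilde X^i$ and $\tau^{(0)} = -\sum_i a_i^{(-1)}\tilde X^i - 2A\sum_i a^i\tilde X^i$; the terms $\tau^{(-1)}$, $(X^0)^{(-1)}$, $a_0^{(-1)}$ enter $\rho$ only at orders strictly below $r^{-3}$ and can be dropped. Combined with $\sigma_{ab} = u^4 r^2 \tilde\sigma_{ab}$, hence $\Delta = u^{-4} r^{-2} \tilde\Delta$, this yields $|\nabla\tau|^2$ and $\Delta\tau$ to the required order. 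Squaring and subtracting the $|H|$, $|H_0|$ expansions from the previous section gives
\[
|H_0|^2 - |H|^2 = \frac{16A}{r^3} + \frac{1}{r^4}\bigl[-8\tilde\Delta u^{(-2)} + 32 u^{(-2)} - 96 A^2 + \bigl({\textstyle \sum_k} B_k\tilde X^k\bigr)^2\bigr] + o(r^{-4});
\]
inserting $u^{(-2)}$ from Lemma \ref{harmonic_expansion} and using $\tilde\Delta\tilde X^i = -2\tilde X^i$ together with $\tilde\Delta(\tilde X^i\tilde X^j) = -6\tilde X^i\tilde X^j + 2\delta^{ij}$ rewrites the bracket as $48 c_i\tilde X^i + \tfrac{9}{4}(\sum B_i\tilde X^i)^2 - \tfrac{19}{4}\sum B_i^2 - 96A^2$.

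For the denominator, the crucial identity is that at leading order
\[
\bigl(1+|\tilde\nabla\tau^{(1)}|^2\bigr)\tfrac{4}{r^2} + \tfrac{1}{r^2}(\tilde\Delta\tau^{(1)})^2 = \tfrac{4(a^0)^2}{r^2},
\]
which follows from $\tilde\Delta\tau^{(1)} = -2\tau^{(1)}$, the spherical identity $|\tilde\nabla\tau^{(1)}|^2 = \sum_i(a^i)^2 - (\tau^{(1)})^2$, and the unit-norm relation $(a^0)^2 = 1 + \sum_i(a^i)^2$. This collapses both inner square roots to $2a^0/r$ and gives $D = 4a^0/r + r^{-2}D_1 + o(r^{-2})$, whence $\rho^{(-2)} = 4A/a^0$. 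The coefficient $\rho^{(-3)}$ then comes from $\rho^{(-3)} = N_2/D_0 - N_1 D_1/D_0^2$, where $D_1$ is obtained from (i) the $r^{-1}$-correction to $1+|\nabla\tau|^2$, which picks up $\tau^{(0)}$ through $2\tilde\nabla\tau^{(1)}\cdot\tilde\nabla\tau^{(0)}$ together with the $-4A r^{-1}$ correction in $\sigma^{ab}$; (ii) the $r^{-3}$-corrections in $|H|^2$ and $|H_0|^2$; and (iii) the $r^{-3}$-correction in $(\Delta\tau)^2$, which involves $\tilde\Delta\tau^{(0)}$ and the $-4A$ correction in $u^{-4}$. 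After multiplying by $a_0^3$, the $a_i^{(-1)}\tilde X^i$ piece of $\tau^{(0)}$ produces the $-4A\sum_i a_i a_i^{(-1)}$ term, the $-2A\sum_i a^i\tilde X^i$ piece produces $-4A^2(\sum_i a_i\tilde X^i)^2$, and the remaining corrections convert the $-24A^2$ from the numerator into the stated $-12A^2$.

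The main obstacle is bookkeeping: the quotient and the inner square roots generate many cross terms, and miscounting any one would spoil the final identity. A key simplification throughout is to systematically use the Minkowski relation $(a^0)^2 - \sum_i (a^i)^2 = 1$ and the derived identity $4(1+|\tilde\nabla\tau^{(1)}|^2) + (\tilde\Delta\tau^{(1)})^2 = 4(a^0)^2$ to collapse structurally identical factors; this is what ultimately produces the clean $a_0^{-3}$ scaling on the left-hand side.
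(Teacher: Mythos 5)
Your proposal is correct and follows essentially the same route as the paper: expand $\tau$, $(\Delta\tau)^2$, $1+|\nabla\tau|^2$, $|H|$ and $|H_0|$, substitute into the formula for $\rho$, and use $(a^0)^2-\sum_i(a^i)^2=1$ together with the eigenfunction identities to simplify; your intermediate coefficients (the $r^{-4}$ bracket in $|H_0|^2-|H|^2$, the collapse of the denominator to $4a^0/r$, and the cancellation producing the final $-12A^2$) all check out against the paper's $h_0^{(-3)}-h^{(-3)}$ computation. The only difference is that you rationalize the difference of square roots before expanding, which is a presentational reorganization of the same Taylor expansion rather than a genuinely different argument.
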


\begin{proof}
With $\tau=-\langle X, T_0\rangle=r(-a_i \tilde{X}^i)+(-2Aa_i - a_i^{(-1)})\tilde{X}^i + o(1)$, we compute
\[
\begin{split}
(\Delta\tau)^2= & r^{-2} 4(\sum_i a_i \tilde{X}^i)^2+r^{-3}(-16 A a_i a_j \tilde{X}^i\tilde{X}^j+8a_i a_j^{(-1)}\tilde{X}^i\tilde{X}^j) +o(r^{-3})\\
1+|\nabla\tau|^2= &a_0^2-(\sum_i a_i \tilde{X}^i)^2+r^{-1}(2a_i a_i^{(-1)}-2a_ia_j^{(-1)}\tilde{X}^i\tilde{X}^j)+o(r^{-1}).
\end{split}
\]
It follows that 
\[\begin{split}\rho
&=\frac{4A}{a_0} r^{-2}+r^{-3} a_0^{-3}[a_0^2(h_0^{(-3)}-h^{(-3)})-4A^2(\sum_i a_i \tilde{X}^i)^2-4A\sum_i a_i a_i^{(-1)}]+o(r^{-3}).\end{split}\]
By a direct computation, we obtain
\[h_0^{(-3)}-h^{(-3)}=-\frac{19}{16}\sum_i B_i^2+\frac{9}{16}(\sum B_i\tilde{X}^i)^2+12 c_i\tilde{X}^i-12A^2.\]
\end{proof}

\begin{lemma}
For the solution of the optimal embedding equation obtained in  Lemma \ref{HA_optimal_expansion}. 
Set $g= B_i \tilde{X}^i$. 
We have $ j_a=r^{-1}j_a^{(-1)}+r^{-2}j_a^{(-2)}  + o(r^{-2}),$ where
\[\begin{split}
j^{(-1)}_a =&-\frac{3}{2}\partial_a g + (\alpha_H^{(-1)})_a \\
 j^{(-2)}_a 
= &
\frac{1}{2}A\partial_a g-\frac{6A}{a_0}a_i^{(-1)}\partial_a\tilde{X}^i+\frac{57}{128A} (\sum_i B_i^2)\partial_a g-\frac{25}{128A}g^2\partial_a g
-\frac{9}{2A}(c_i\tilde{X}^i)\partial_a g\\
&+\frac{3}{8Aa_0}(\sum_i B_ia_i^{(-1)})\partial_a g-\frac{3}{2A} g(c_i\partial_a \tilde{X}^i) - (\alpha_{H_0}^{(-2)})_a    + (\alpha_H^{(-2)} )_a \end{split}\]
\end{lemma}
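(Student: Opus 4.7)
The plan is to substitute the expansions from Lemma~\ref{HA_optimal_expansion} and Lemma~\ref{HA_expansion_rho} into
\[ j_a = \rho\, \partial_a \tau - \partial_a\!\left[\sinh^{-1}\!\left(\frac{\rho\,\Delta\tau}{|H_0||H|}\right)\right] - (\alpha_{H_0})_a + (\alpha_H)_a \]
and read off the coefficients of $r^{-1}$ and $r^{-2}$. Since $\sigma_{ab}=u^4 r^2 \tilde\sigma_{ab}$ gives $\Delta=u^{-4}r^{-2}\tilde\Delta$ and $\tilde\Delta \tilde X^i=-2\tilde X^i$, every ingredient becomes an explicit series in $r^{-1}$ with coefficients polynomial in $A$, $B_i$, $c_i$, $d_{ij}$, $a^i$, and $a_i^{(-1)}$. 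In particular $\tau = -r\, a_i\tilde X^i - (2A a_i + a_i^{(-1)})\tilde X^i + O(r^{-1})$, and the argument of $\sinh^{-1}$ is of order $r^{-1}$ since $\rho\,\Delta\tau = O(r^{-3})$ while $|H_0||H| = 4/r^2 + O(r^{-3})$, so the cubic correction $-\tfrac{1}{6}x^3$ of $\sinh^{-1}(x)$ only enters at $r^{-3}$ and may be dropped throughout.

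At leading order, $\rho\,\partial_a\tau$ contributes $-\frac{4A a_i}{a^0}\partial_a\tilde X^i / r$, which by the identity $a^k/a^0 = B_k/(4A)$ established in Lemma~\ref{HA_optimal_expansion} equals $-\partial_a g / r$ where $g = B_i\tilde X^i$. A direct computation shows the $\sinh^{-1}$ argument is $g/(2r) + O(r^{-2})$, so its derivative contributes $-\tfrac{1}{2}\partial_a g / r$. Combined with the fact that $(\alpha_{H_0})_a$ starts only at $r^{-2}$ while $(\alpha_H)_a$ carries the piece $(\alpha_H^{(-1)})_a/r$, these three contributions sum to $j_a^{(-1)}= -\tfrac{3}{2}\partial_a g + (\alpha_H^{(-1)})_a$, as claimed.

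For the subleading term, each ingredient must be carried one further order. The product $\rho\,\partial_a\tau$ splits as $\rho^{(-2)}(\partial_a\tau)^{(0)} + \rho^{(-3)}(\partial_a\tau)^{(1)}$, yielding the $A\partial_a g$ piece, the $a_i^{(-1)}\partial_a\tilde X^i$ piece, and the $(c_i\tilde X^i)\partial_a g$ and $g\,(c_i\partial_a\tilde X^i)$ pieces once the explicit formula for $\rho^{(-3)}$ from Lemma~\ref{HA_expansion_rho} together with $a^i/a^0 = B_i/(4A)$ are substituted. The derivative of $\sinh^{-1}$ produces additional combinations of $g^2\partial_a g$ and $(\sum_i B_i^2)\partial_a g$ through the subleading parts of $\rho$, $\Delta\tau$, $|H_0|$ and $|H|$, and in particular the $1/(a_0)\sum_i B_i a_i^{(-1)}$ term from the $r^{-1}$ expansion of $1+|\nabla\tau|^2$. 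The $(\alpha_H^{(-2)}-\alpha_{H_0}^{(-2)})_a$ difference supplies the last two terms in the stated formula.

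The main obstacle is not conceptual but organizational: many disparate contributions must conspire to yield the specific coefficients $\tfrac{57}{128A}$, $-\tfrac{25}{128A}$, $\tfrac{3}{8Aa_0}$, and so on. A practical strategy is to expand each of the five factors appearing in $\rho\,\Delta\tau/(|H_0||H|)$ separately to one order beyond leading, multiply them out, and only at the very end impose $a^i/a^0 = B_i/(4A)$ and the formula for $u^{(-2)}$ from Lemma~\ref{harmonic_expansion}, so that the $a^0$ denominators cancel cleanly and the remaining $A$-dependence groups into the stated fractional coefficients.
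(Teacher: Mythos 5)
Your proposal is correct and follows essentially the same route as the paper: substitute the expansions of $\rho$, $\tau$, $\Delta\tau$, $|H|$, $|H_0|$, $\alpha_H$, $\alpha_{H_0}$ into the defining formula for $j_a$, note that the argument of $\sinh^{-1}$ is $O(r^{-1})$ so only its linear part contributes through order $r^{-2}$, and collect coefficients, using $a^k/a^0=B_k/(4A)$ to reduce everything to $g=B_i\tilde X^i$. Your leading-order bookkeeping ($-\partial_a g$ from $\rho\,\partial_a\tau$ and $-\tfrac12\partial_a g$ from the $\sinh^{-1}$ term) matches the paper's computation exactly.
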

\begin{proof}
Recall that
\[j_a=\rho \partial_a \tau - \partial_a [ \sinh^{-1} (\frac{\rho \Delta \tau }{|H_0||H|})] - (\alpha_{H_0})_a + (\alpha_{H})_a.\]
We compute 
\[\begin{split}\sinh^{-1}(\frac{\rho\Delta\tau}{|H| |H_0|}) = &\frac{1}{4}r^{-1}\rho^{(-2)}(\Delta\tau)^{(-1)} \\
&+\frac{1}{4} r^{-2}[\rho^{(-2)}(\Delta\tau)^{(-2)}+\rho^{(-3)}(\Delta\tau)^{(-1)}-\frac{1}{2}(h^{(-2)}+h_0^{(-2)})\rho^{(-2)}(\Delta\tau)^{(-1)}]+ o(r^{-2})\end{split}\]

Since $\Delta\tau=r^{-1} (2a_i\tilde{x}^i)+r^{-2}[2(a_i^{(-1)}-2A a_i)\tilde{x}^i]$ and $\rho^{(-2)}=\frac{4A}{a_0}$,  we obtain
\[
\begin{split}
\partial_a \sinh^{-1}(\frac{\rho\Delta\tau}{|H| |H_0|})=& r^{-1}\partial_a(\frac{2A}{a_0} a_i\tilde{X}^i)+r^{-2}\partial_a(\frac{2A}{a_0}a_i^{(-1)} \tilde{X}^i+\frac{8A^2}{a_0} a_i \tilde{X}^i+\frac{1}{2}
\rho^{(-3)} a_i\tilde{X}^i) + o(r^{-2})\\
\rho\partial_a \tau= &-r^{-1}(\frac{4A}{a_0}a_i\partial_a \tilde{X}^i)-r^{-2}[\frac{4A}{a_0}(2Aa_i+a_i^{(-1)})+\rho^{(-3)}a_i]\partial_a \tilde{X}^i + o(r^{-2}).
\end{split}
\]
Collecting the terms gives the desired expansion for the lemma.
\end{proof}

Recall that the conserved quantities are the limit of 

\[
\begin{split}&E(\Sigma_r, X, T_0, K)=\frac{(-1)}{8\pi} \int_\Sigma
\left[ \langle K, T_0\rangle \rho+j(K^\top) \right]d\Sigma,\end{split}
\]
where $K$ is a Killing vector field in $\R^{3,1}$ of the form
\[  K= K_{\alpha}^{\,\,\, \beta} x^\alpha \frac{\partial}{\partial x^\beta} \] with $K_{\alpha \beta}$ being skew-symmetric. We first compute the conserved quantities for such $K$ in general and then specify the choice of $K_{\alpha \beta}$ corresponding to angular momentum and center of mass later. 
\begin{lemma}\label{evaluation_1}
We have
\[\begin{split}\int_{\Sigma_r} \langle K, T_0\rangle \rho \sqrt{\sigma}=& \int_{S^2}   [ K_{\alpha\gamma} (X^\alpha)^{(1)} (T^\gamma)^{(0)} \rho^{(-3)}]  d S^2
+o(1)  \\
 \int_{\Sigma_r} \langle K, \frac{\partial X}{\partial u^a} \rangle \sigma^{ab} j_b\sqrt{\sigma}=& \int_{S^2} [K_{\alpha\gamma} (X^\alpha)^{(1)} \frac{\partial (X^\gamma)^{(1)}}{\partial u^a}j_b^{(-2)}]
\tilde{\sigma}^{ab}  d S^2+o(1).
\end{split}\]

\end{lemma}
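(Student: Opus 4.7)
The plan is to substitute the asymptotic expansions for $X(r)$, $T_0(r)$, $\sigma$, $\rho$, and $j_a$ supplied by Lemma~\ref{HA_optimal_expansion} and the subsequent lemmas into each integrand, expand in powers of $r$, and identify which pieces contribute in the $r\to\infty$ limit. For both integrals I would use $\sqrt{\det\sigma}=r^2(1+4Ar^{-1}+O(r^{-2}))\sqrt{\det\tilde\sigma}$ and $\sigma^{ab}=r^{-2}(\tilde\sigma^{ab}-4Ar^{-1}\tilde\sigma^{ab}+O(r^{-2}))$, together with $(X^0)^{(1)}=0$, $(X^i)^{(1)}=\tilde X^i$, $(X^0)^{(0)}=0$, and $(X^i)^{(0)}=2A\tilde X^i$.

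For the first identity I would write $\langle K,T_0\rangle=K_{\alpha\gamma}X^\alpha T_0^\gamma$ and read off the contributions at each order of $r$. The leading $O(r)$ piece is $K_{\alpha\gamma}(X^\alpha)^{(1)}(T^\gamma)^{(0)}\rho^{(-2)}\sqrt{\tilde\sigma}\cdot r$; since $\rho^{(-2)}=4A/a^0$ is constant and $(X^\alpha)^{(1)}$ is either $0$ or $\tilde X^i$, its integral over $S^2$ vanishes by $\int_{S^2}\tilde X^i\,dS^2=0$. The $O(1)$ corrections other than the one we want to retain arise from promoting exactly one of the factors $X^\alpha$, $T_0^\gamma$, or $\sqrt\sigma$ to its next order while keeping $\rho$ at $\rho^{(-2)}$; each of these is again a first spherical harmonic $\tilde X^i$ times a constant and hence integrates to zero. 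What survives is precisely $K_{\alpha\gamma}(X^\alpha)^{(1)}(T^\gamma)^{(0)}\rho^{(-3)}\sqrt{\tilde\sigma}$, which gives the claimed limit.

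For the second identity the key observation is that $j^{(-1)}_a\equiv 0$: indeed $j^{(-1)}_a=-\tfrac{3}{2}\partial_a g+(\alpha_H^{(-1)})_a$, and the expansion of $\alpha_H$ derived in the preceding section yields $(\alpha_H^{(-1)})_a=\tfrac{3}{2}\partial_a g$ with $g=B_k\tilde X^k$. This single identity removes the potentially $O(r)$ divergent contribution and every $O(1)$ cross-term in which $j^{(-1)}_a$ would multiply a subleading correction of $\langle K,\partial_a X\rangle$, $\sigma^{ab}$, or $\sqrt\sigma$. The only surviving $O(1)$ contribution is the leading order of each other factor multiplied against the $r^{-2}$ piece of $j_a$, namely $j^{(-2)}_a$, which after substituting $\langle K,\partial_a X\rangle=r^2 K_{\alpha\gamma}(X^\alpha)^{(1)}\partial_a(X^\gamma)^{(1)}+O(r)$, $\sigma^{ab}\sim r^{-2}\tilde\sigma^{ab}$, and $\sqrt\sigma\sim r^2\sqrt{\tilde\sigma}$ produces precisely the stated integral.

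The main obstacle is the bookkeeping: one must verify that no unsuspected combination of subleading pieces produces an $O(1)$ contribution outside those recorded in the formula. For the first integral this reduces to the constancy of $\rho^{(-2)}$ together with the fact that every stray correction at $O(1)$ takes the form (constant)$\times\tilde X^i$ under the integral and so vanishes by orthogonality of spherical harmonics. For the second integral it is the identity $j^{(-1)}_a=0$ that collapses the entire web of potential $O(1)$ cross-terms in one step, leaving only the contribution involving $j^{(-2)}_a$.
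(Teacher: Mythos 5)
Your proposal is correct and follows essentially the same route as the paper: substitute the expansions of $X$, $T_0$, $\sigma$, $\rho$, $j_a$, collect the $O(r)$ and $O(1)$ terms, and kill the unwanted ones by the orthogonality $\int_{S^2}\tilde X^i\,dS^2=0$ together with $\rho^{(-2)}=\mathrm{const}$ and $X^{(0)}=2AX^{(1)}$. The one place you diverge is the second integral: you discard all cross-terms containing $j^{(-1)}_a$ by the single identity $j^{(-1)}_a=-\tfrac{3}{2}\partial_a g+(\alpha_H^{(-1)})_a=0$, whereas the paper instead asserts that the order-$r$ coefficient $K_{\alpha\gamma}[(X^\alpha)^{(1)}\partial_a(X^\gamma)^{(0)}+(X^\alpha)^{(0)}\partial_a(X^\gamma)^{(1)}]$ vanishes; your justification is the cleaner one for harmonic asymptotic data (as literally written, the paper's coefficient equals $4AK_{ij}\tilde X^i\partial_a\tilde X^j$, so the vanishing really does come from the $j^{(-1)}_b$ factor), and both arguments land on the same conclusion.
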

\begin{proof}
\[\begin{split} & \int_{\Sigma_r} \langle K, T_0\rangle \rho \sqrt{\sigma} \\
= &\int_{S^2}  (r \langle K, T_0\rangle^{(1)}+ \langle K, T_0\rangle^{(0)})(r^{-2} \rho^{(-2)}+r^{-3} \rho^{(-3)})(r^2\sqrt{\tilde{\sigma}}+r\sqrt{\sigma}^{(1)})+o(1)\\
= &\int_{S^2}   [ \langle K, T_0\rangle^{(1)} \rho^{(-3)}+\langle K, T_0\rangle^{(0)} \rho^{(-2)}
+\frac{1}{2}\tilde{\sigma}^{ab}\sigma_{ab}^{(1)} \langle K, T_0\rangle^{(1)} \rho^{(-2)}]\sqrt{\tilde{\sigma}}+o(1).\\\end{split}\]
On the other hand,
\[\begin{split}
\sigma^{ab}\sqrt{\sigma}=&\tilde{\sigma}^{ab}\sqrt{\tilde{\sigma}}+r^{-1}[\frac{1}{2}\tilde{\sigma}^{ab}(\tilde{\sigma}^{pq}\sigma_{pq}^{(1)}) 
-\tilde{\sigma}^{ap}\sigma_{pq}^{(1)}\tilde{\sigma}^{qb}]\sqrt{\tilde{\sigma}} +o(r^{-1})\\
=&\tilde{\sigma}^{ab}\sqrt{\tilde{\sigma}}+o(r^{-1})
\end{split}\]
since $\sigma_{ab}^{(1)}=\frac{1}{2}tr_{\tilde{\sigma}}\sigma^{(1)}\tilde{\sigma}_{ab}$. Therefore, 
\[\begin{split}& \int_{\Sigma_r} \langle K, \frac{\partial X}{\partial u^a} \rangle \sigma^{ab} j_b\sqrt{\sigma}\\
=&\int_{S^2} (r^2 \langle K, \frac{\partial X}{\partial u^a} \rangle^{(2)}+r \langle K, \frac{\partial X}{\partial u^a} \rangle^{(1)})  \tilde{\sigma}^{ab}(r^{-1} j_b^{(-1)}+r^{-2} j_b^{(-2)})\sqrt{\tilde{\sigma}} + o(1).
\end{split}\]
We have
\[
\begin{split}\int_{\Sigma_r} \langle K, T_0\rangle \rho \sqrt{\sigma}
=& \int_{S^2}   [ \langle K, T_0\rangle^{(1)} \rho^{(-3)}+\langle K, T_0\rangle^{(0)} \rho^{(-2)}] \sqrt{\tilde{\sigma}}
+o(1)\\
 \int_{\Sigma_r} \langle K, \frac{\partial X}{\partial u^a} \rangle \sigma^{ab} j_b\sqrt{\sigma}= &\int_{S^2} [\langle K, \frac{\partial X}{\partial u^a} \rangle^{(2)}j_b^{(-2)}+ \langle K, \frac{\partial X}{\partial u^a} \rangle^{(1)}j_b^{(-1)}]
\tilde{\sigma}^{ab} \sqrt{\tilde{\sigma}}+o(1)
\end{split}\]
since   $X^{(0)}=2A X^{(1)}$. Moreover,
\[
\begin{split}
\langle K, T\rangle = & r K_{\alpha\gamma} (X^\alpha)^{(1)} (T^\gamma)^{(0)}+K_{\alpha\gamma} [(X^\alpha)^{(1)} (T^\gamma)^{(-1)}
+(X^\alpha)^{(0)} (T^\gamma)^{(0)}] + o(1) \\
\langle K,  \frac{\partial X}{\partial u^a} \rangle=& r^2 K_{\alpha\gamma} (X^\alpha)^{(1)} \frac{\partial (X^\gamma)^{(1)}}{\partial u^a}
+rK_{\alpha\gamma} [(X^\alpha)^{(1)} \frac{\partial (X^\gamma)^{(0)}}{\partial u^a}+(X^\alpha)^{(0)} \frac{\partial (X^\gamma)^{(1)}}{\partial u^a}] + o(r).
\end{split}\]

Hence,
\[\begin{split}\int_{\Sigma_r} \langle K, T_0\rangle \rho \sqrt{\sigma}=&\int_{S^2}   [ K_{\alpha\gamma} (X^\alpha)^{(1)} (T^\gamma)^{(0)} \rho^{(-3)}] \sqrt{\tilde{\sigma}}
+o(1)\\
 \int \langle K, \frac{\partial X}{\partial u^a} \rangle \sigma^{ab} j_b\sqrt{\sigma}=&\int_{S^2} [K_{\alpha\gamma} (X^\alpha)^{(1)} \frac{\partial (X^\gamma)^{(1)}}{\partial u^a}j_b^{(-2)}]
\tilde{\sigma}^{ab} \sqrt{\tilde{\sigma}}+o(1),
\end{split}\]
since $K_{\alpha\gamma} [(X^\alpha)^{(1)} \frac{\partial (X^\gamma)^{(0)}}{\partial u^a}+(X^\alpha)^{(0)} \frac{\partial (X^\gamma)^{(1)}}{\partial u^a}]=0$ and $X^{(0)}=2A X^{(1)}$. 

\end{proof}
The following computational lemma is useful in the evaluation of CWY total conserved quantities. 
\begin{lemma}\label{evaluation_2}
We have 
\[
\begin{split}
\frac{1}{8 \pi}\int_{S^2} \tilde X^k \rho^{(-3)} dS^2=&  \frac{2c^k }{a^0}\\
\frac{1}{8 \pi}\int_{S^2} \tilde X^k \tilde X^l_a \tilde \sigma^{ab}  j_b^{(-2)}dS^2=&\frac{c_l B_k - c_kB_ l}{4A}+\frac{d_{lk}-d_{kl}}{4}.\\
\end{split}
\]
\end{lemma}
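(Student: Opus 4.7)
The plan is to substitute the expansions from Lemma \ref{HA_expansion_rho} and the preceding lemma, then evaluate the two integrals over $S^2$ by spherical-harmonic orthogonality. Throughout I will use the standard identities $\int_{S^2}\tilde X^i\tilde X^j\,dS^2=\tfrac{4\pi}{3}\delta^{ij}$, $\int_{S^2}\tilde X^i\tilde X^j\tilde X^k\tilde X^l\,dS^2=\tfrac{4\pi}{15}(\delta^{ij}\delta^{kl}+\delta^{ik}\delta^{jl}+\delta^{il}\delta^{jk})$, the vanishing of integrals of odd products of $\tilde X^i$'s, together with the round-sphere relations $\tilde\nabla^a\tilde X^k\tilde\nabla_a\tilde X^l=\delta^{kl}-\tilde X^k\tilde X^l$ and $\tilde\Delta\tilde X^l=-2\tilde X^l$.

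For the first identity I pair $\tilde X^k$ against the expansion of $\rho^{(-3)}$ from Lemma \ref{HA_expansion_rho}. Every summand of $a_0^3\rho^{(-3)}$ other than $12 a_0^2\,c_i\tilde X^i$ is either a constant or involves an even number of factors $\tilde X^i$, so multiplying by $\tilde X^k$ gives an odd polynomial in $\tilde X$ whose integral vanishes. The surviving contribution is $12 a_0^2\,c_i\int_{S^2}\tilde X^i\tilde X^k\,dS^2=16\pi\,a_0^2\,c_k$; after dividing by $8\pi a_0^3$ this yields $2c^k/a_0$.

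For the second identity I substitute $j_b^{(-2)}$ from the preceding lemma. Most of its summands are exact, of the form $\partial_a F$ with $F$ a linear or cubic polynomial in the $\tilde X^i$. For any such term, integration by parts on $S^2$ gives
\[
\int_{S^2}\tilde X^k(\tilde\nabla^a\tilde X^l)\partial_a F\,dS^2=-\int_{S^2}F\bigl(\delta^{kl}-3\tilde X^k\tilde X^l\bigr)dS^2,
\]
which is symmetric in $k\leftrightarrow l$ and hence produces no contribution once contracted with the antisymmetric $K_{\alpha\gamma}$ appearing in Lemma \ref{evaluation_1}. The non-exact summands are the pair $-\tfrac{9}{2A}(c_i\tilde X^i)\partial_a g$ and $-\tfrac{3}{2A}g\,c_i\partial_a\tilde X^i$ (with $g=B_i\tilde X^i$), together with the $d_{ij}$-pieces $\tfrac{5}{2}d_{ij}(\partial_a\tilde X^i)\tilde X^j-\tfrac{1}{2}d_{ij}\tilde X^i\partial_a\tilde X^j$ inside $(\alpha_H^{(-2)})_a$. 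For each I apply $\tilde\nabla^a\tilde X^l\partial_a\tilde X^j=\delta^{lj}-\tilde X^l\tilde X^j$ and reduce to integrals of four-fold $\tilde X$-products evaluable by the quartic identity above. Summing and extracting the antisymmetric part in $k\leftrightarrow l$, the symmetric combinations $c_kB_l+c_lB_k$ and $d_{kl}+d_{lk}$ as well as the trace pieces $c_iB_i\delta^{kl}$ and $d_{ii}\delta^{kl}$ all cancel, leaving $\tfrac{2\pi}{A}(c_lB_k-c_kB_l)+2\pi(d_{lk}-d_{kl})$; division by $8\pi$ produces the stated formula.

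The main obstacle is bookkeeping. Each of the two $(c,B)$ terms generates three distinct index structures ($c_kB_l$, $c_lB_k$, $c_iB_i\delta^{kl}$) with specific rational coefficients coming from the two independent spherical integrals, and only after carefully summing the two terms do the symmetric and trace pieces drop out of the antisymmetric part; an analogous but slightly simpler cancellation must be tracked for the $d_{ij}$ sector. One must also be comfortable discarding the symmetric-in-$k,l$ remainder, which in general depends on the undetermined quantities $(X^0)^{(-1)}$ and $a_i^{(-1)}$ of Lemma \ref{HA_optimal_expansion}; this remainder is annihilated by contraction with the skew tensor $K_{\alpha\beta}$ in Lemma \ref{evaluation_1}, so it contributes nothing to the angular-momentum integral for which the present computational lemma is designed.
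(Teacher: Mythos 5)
Your first identity and the antisymmetric part of your second computation are correct and land on the same coefficients as the paper. The gap is in how you dispose of the symmetric-in-$(k,l)$ part of $\int_{S^2}\tilde X^k\tilde X^l_a\tilde\sigma^{ab}j_b^{(-2)}\,dS^2$. The lemma asserts that this \emph{full} integral equals an expression antisymmetric in $(k,l)$, so you must show the symmetric part vanishes; arguing that it is annihilated by a later contraction with the skew tensor $K_{\alpha\beta}$ is a statement about how the lemma is used, not a proof of the lemma (and in Theorem \ref{thm_final} the center-of-mass term is written as $\tilde X^i A_{0j}\tilde X^j_a\,\tilde\sigma^{ab}j^{(-2)}_b$, where the antisymmetry of the contraction is not manifest). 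Worse, your term-by-term rule $\int\tilde X^k\tilde\nabla^a\tilde X^l\,\partial_aF=-\int F(\delta^{kl}-3\tilde X^k\tilde X^l)$ does \emph{not} give zero for the individual exact summands: for $F=(c_i\tilde X^i)(B_j\tilde X^j)$ the quartic identity produces a nonzero symmetric tensor, and the $(\alpha_{H_0}^{(-2)})_b$ piece produces a multiple of $\int (X^0)^{(-1)}\tilde X^k\tilde X^l\,dS^2$, which depends on the undetermined function $(X^0)^{(-1)}$. These symmetric contributions cancel only in aggregate, and you neither verify the cancellation nor supply a reason for it.

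The missing ingredient is the paper's first step: $j_b^{(-2)}$ is divergence-free on $(S^2,\tilde\sigma)$, being the second-order term of the optimal isometric embedding equation $div_\sigma j=0$. This gives at once
\[
\int_{S^2}\bigl(\tilde X^k\tilde X^l_a+\tilde X^l\tilde X^k_a\bigr)\tilde\sigma^{ab}j_b^{(-2)}\,dS^2=\int_{S^2}\partial_a\bigl(\tilde X^k\tilde X^l\bigr)\tilde\sigma^{ab}j_b^{(-2)}\,dS^2=0,
\]
so the integral equals its antisymmetrization; moreover the antisymmetrized field $\bigl(\tilde X^k\tilde X^l_a-\tilde X^l\tilde X^k_a\bigr)\tilde\sigma^{ab}$ is itself divergence-free, so every exact summand of $j_b^{(-2)}$ (including those involving $(X^0)^{(-1)}$ and $a_i^{(-1)}$) drops out with no computation at all. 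With this observation inserted, the rest of your calculation goes through and coincides with the paper's.
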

\begin{proof}
Recall that 
\[ \int_{S^2} (\tilde X^1)^a (\tilde X^2)^b (\tilde X^3)^c dS^2= 0  \]
unless $a$, $b$ and $c$ are all even. On the other hand,
\[  \int_{S^2} \tilde X^i \tilde X^j dS^2 = \frac{4 \pi \delta^{ij}}{3}.   \]
Using the above formula and the expression of $\rho^{(-3)}$ from Lemma \ref{HA_expansion_rho}, we get 
\[ \frac{1}{8 \pi}\int_{S^2} \tilde X^k \rho^{(-3)} dS^2=\frac{2c^k}{a^0}.  \]
For the second identity, recall that from the second order term of the optimal embedding equation, $j_{b}^{(-2)}$ is divergence free. Hence,
\[
\begin{split}
\int_{S^2} \tilde X^k \tilde X^l_a \tilde \sigma^{ab}  j_b^{(-2)}dS^2 
=  \frac{1}{2} \int_{S^2} (\tilde X^k \tilde X^l_a-\tilde X^l \tilde X^k_a ) \tilde \sigma^{ab}  j_b^{(-2)}dS^2. 
\end{split}
\]
Moreover,
\[
\begin{split}
 & \int_{S^2} (\tilde X^k \tilde X^l_a-\tilde X^l \tilde X^k_a )\tilde \sigma^{ab} [\partial_b (X^0)^{(-1)}+\frac{1}{2}\partial_b(\tilde{\Delta} (X^0)^{(-1)}] dS^2  \\
= &- \int_{S^2} \tilde \nabla^a(\tilde X^k \tilde X^l_a-\tilde X^l \tilde X^k_a )\tilde  [ (X^0)^{(-1)}+\frac{1}{2}(\tilde{\Delta} (X^0)^{(-1)}] dS^2
=  0
\end{split}
\]
and
\[
\begin{split}
  &  \int_{S^2} \tilde X^k \tilde X^l_a \tilde \sigma^{ab}  j_b^{(-2)}dS^2  \\
= &\frac{1}{2}  \int_{S^2} (\tilde X^k \tilde X^l_a-\tilde X^l \tilde X^k_a ) \tilde \sigma^{ab} \Big [  \frac{1}{2}A\partial_b g-\frac{6A}{a_0}a_i^{(-1)}\partial_b\tilde{X}^i+\frac{57}{128A} (\sum_i B_i^2)\partial_b g \\
&-\frac{25}{128A}g^2\partial_b g -\frac{9}{2A}(c_i\tilde{X}^i)\partial_b g
+\frac{3}{8Aa_0}(\sum_i B_ia_i^{(-1)})\partial_b g-\frac{3}{2A} g(c_i\partial_b \tilde{X}^i)    \\
& -A\sum_k 6 B_k\partial_b \tilde{X}^k+\frac{5}{2}d_{ij} (\partial_b \tilde{X}^i)\tilde{X}^j-\frac{1}{2}d_{ij}\tilde{X}^i\partial_b \tilde X^j \Big ]   dS^2 \\
=& \frac{1}{2} \int_{S^2} (\tilde X^k \tilde X^l_a-\tilde X^l \tilde X^k_a ) \tilde \sigma^{ab} \Big [
\frac{-3}{A}(c_i\tilde{X}^i)\partial_b B_j \tilde x^j +\frac{3}{2}d_{ij} [\partial_b (\tilde{X}^i)\tilde{X}^j- (\partial_b \tilde{X}^j)\tilde{X}^i ]
\Big ] dS^2 \\
=& \frac{3}{2} \int_{S^2} \frac{1}{A}[\tilde X^l \tilde X^i  c_iB_j \delta^{kj} - \tilde X^k \tilde X^i  c_iB_j \delta^{lj}  ] +  [d_{ij} \tilde X^j \tilde X^k \delta^{li} - d_{ij} \tilde X^j \tilde X^l  \delta^{kj}  ] dS^2\\
= & \frac{2\pi(c_lB_ k- c_k B_l)}{A}+2(d_{lk}-d_{kl})\pi.
\end{split}
\]
\end{proof}
\begin{theorem}\label{thm_final}
For a harmonic asymptotic initial data set, let $X(r),T_0(r)$ be the solution of optimal embedding equation on $\Sigma_r$ obtained in Lemma \ref{HA_optimal_expansion}. Let  $ T_0(r)= A(r) \frac{\partial}{\partial x^0}$
where
\[ A(r) = A_{\alpha \beta} + O(r^{-1}).  \]
 The CWY total angular momentum and center of mass are 
\[
\begin{split}
C_i = & \frac{2c_i}{a^0} + \frac{(c_i B_j - c_j B_i)A_{0j}}{4A}+\frac{(d_{ij}-d_{ji})A_{0j}}{4}\\
J_i= & \sum_{j,k,l}\frac{ \epsilon_{ijk}}{4}[\frac{A_{kl} ( c_jB_l  - c_l B_j)}{A}+ A_{kl}(d_{jl} - d_{lj})].
\end{split}
\]
\end{theorem}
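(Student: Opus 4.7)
The plan is to reduce the limit defining $C_i$ and $J_i$ in Definition \ref{total_conserved} to a finite $S^2$-integral via Lemma \ref{evaluation_1}, and then to apply Lemma \ref{evaluation_2} to evaluate it. The key enabling observation is from Lemma \ref{HA_optimal_expansion}: the leading $r$-coefficient of the optimal embedding has $(X^0)^{(1)}=0$ and $(X^i)^{(1)}=\tilde X^i$, so after pairing the skew-symmetric Killing matrix $K_{\alpha\gamma}$ against $(X^\alpha)^{(1)}$ the resulting integrands collapse to expressions linear in $\tilde X^i$ (in the $\rho$-term) or bilinear in $\tilde X^i$ and $\partial_a \tilde X^l$ (in the $j$-term), which are precisely the templates handled by Lemma \ref{evaluation_2}.

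In more detail, I would first identify the leading-order skew matrix $K_{\alpha\beta}$ for each Killing field. For the $i$-th center of mass the Killing field is the Lorentz conjugate by $A(r)$ of the elementary boost, so the nontrivial entries at leading order are built from the column $(a^0,a^i)$ together with the boost entries $A_{0j}$; for the $i$-th angular momentum it is the conjugate of an elementary spatial rotation, and its nontrivial entries involve the spatial block $A_{kl}$. Substituting into Lemma \ref{evaluation_1} together with $(T^\gamma)^{(0)} = (a^0, a^i)$ reduces each conserved quantity to a linear combination of $\int_{S^2}\tilde X^k\rho^{(-3)}\,dS^2$ and $\int_{S^2}\tilde X^k(\partial_a\tilde X^l)\tilde\sigma^{ab}j_b^{(-2)}\,dS^2$, weighted by explicit products of the $A_{\alpha\beta}$.

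Lemma \ref{evaluation_2} then evaluates these two integrals to $2c^k/a^0$ and $(c_lB_k-c_kB_l)/(4A)+(d_{lk}-d_{kl})/4$, respectively. For the center of mass, the first integral produces the leading $2c_i/a^0$ term and the second, after contraction with $A_{0j}$ and the use of the constraint $a^k/a^0 = B_k/(4A)$ from Lemma \ref{HA_optimal_expansion}, yields the $(c_iB_j-c_jB_i)A_{0j}/(4A)$ and $(d_{ij}-d_{ji})A_{0j}/4$ corrections. For the angular momentum, the analogous contraction with $A_{kl}$ paired with the outer $\epsilon_{ijk}$ from Definition \ref{total_conserved} produces the claimed formula for $J_i$.

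The main technical difficulty is the bookkeeping of the Lorentz transformation: one must express the conjugated skew matrix $K_{\alpha\beta}$ in terms of the entries $A_{\alpha\beta}$ and identify which components survive the pairing with the spatial-only $(X^\alpha)^{(1)}$ and $(T^\gamma)^{(0)}$, and check that the combinations $a^k/a^0 = B_k/(4A)$ produce exactly the displayed coefficients. A secondary challenge is that the explicit formulas for $\rho^{(-3)}$ and $j_b^{(-2)}$ in the preceding lemmas are long; however, the vast majority of their terms vanish upon integration against $\tilde X^k$ or $\tilde X^k\partial_a\tilde X^l$ over $S^2$, since only integrands of even total degree in $\tilde X^i$ contribute. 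This parity cancellation is what collapses the bulky intermediate expressions to the clean final answer.
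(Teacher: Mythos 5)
Your proposal is correct and follows essentially the same route as the paper: write out the conjugated Killing field, reduce the limit to $S^2$-integrals of $\tilde X^i\rho^{(-3)}$ and $\tilde X^i\tilde X^j_a\tilde\sigma^{ab}j_b^{(-2)}$ via Lemma \ref{evaluation_1}, and evaluate these with Lemma \ref{evaluation_2} (the only cosmetic difference is that the constraint $a^k/a^0=B_k/(4A)$ is already absorbed into the formulas for $\rho^{(-3)}$ and $j_b^{(-2)}$ in the earlier lemmas rather than being invoked at the final contraction step).
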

\begin{proof}
Recall that the definitions from Definition \ref{total_conserved} and we compute
\[
\begin{split}
 & \frac{1}{8 \pi}E(\Sigma_r, X(r), T_0(r), A({r})(x^i\frac{\partial}{\partial x^0}+x^0\frac{\partial}{\partial x^i}))\\
=& \frac{1}{8 \pi} \int_{\Sigma_r} X^i \rho - (X^i A_{0 \alpha}(r) X^{\alpha}_a +X^0 A_{i \alpha}(r)X^\alpha_a ) \sigma^{ab} j_{b} d \Sigma_r \\
= & \frac{1}{8 \pi} \int_{S^2} \tilde X^i \rho^{(-3)} - ( \tilde X^i A_{0 j}  \tilde X^{j}_a  ) \tilde \sigma^{ab} j_{b}^{(-2)} dS^2 + o(1) \\
=& \frac{2c_i}{a^0} + \frac{(c_i B_j - c_j B_i)A_{0j}}{4A}+ \frac{( d_{ij}- d_{ji})A_{0j}}{4}+ o(1).
\end{split}
\]
Similarly,
\[
\begin{split}
 & \frac{1}{8 \pi}E(\Sigma_r , X(r), T_0(r), A({r})(x^j\frac{\partial}{\partial x^k}-x^k\frac{\partial}{\partial x^j})) \\
=& \frac{-1}{8 \pi}\int_{\Sigma_r} (X^j A_{ k \alpha}(r) X^{\alpha}_a +X^j A_{k \alpha}(r)X^\alpha_a ) \sigma^{ab} j_{b} d \Sigma_r \\
= & \frac{-1}{8 \pi} \int_{S^2}  (\tilde X^j A_{ k l} \tilde  X^{l}_a + \tilde X^j A_{k l}\tilde X^l_a )  \tilde \sigma^{ab} j_{b}^{(-2)} dS^2 + o(1) \\
=& \frac{A_{kl} ( c_jB_l  - c_l B_j)}{4A}+\frac{ A_{kl}(d_{jl} - d_{lj})}{4}+ o(1).
\end{split}
\]
\end{proof}   
\begin{remark} By comparing \eqref{adm_bort} with the above theorem, we conclude that
for a harmonic asymptotic initial data set with vanishing linear momentum ($B_i=0$ and $A_{\alpha \beta} = \delta_{\alpha \beta}$), the CWY total angular momentum and center of mass are the same as the ADM angular momentum and BORT center of mass, respectively. When the linear momentum is non-zero, the CWY total angular momentum and center of mass become linear combinations of the ADM angular momentum and BORT center of mass, where the coefficients depend on the ADM energy-momentum vector.
\end{remark}

\end{document}